\documentclass[a4paper,12pt,reqno]{amsart} 

\usepackage{amssymb,latexsym}
\usepackage{amsmath,amsthm}
\usepackage{enumerate}
\usepackage[mathscr]{eucal}
\usepackage{subcaption}
\usepackage{wrapfig}
\usepackage{graphicx}
\usepackage{multicol}
\usepackage{url}

\usepackage{graphics} 

\setlength{\textwidth}{5in}
\setlength{\textheight}{7.7in}

\setcounter{page}{1}

\theoremstyle{plain}
\newtheorem{theorem}{\indent Theorem}[section]

\newtheorem{lemma}[theorem]{\indent Lemma}
\theoremstyle{definition}
\newtheorem{definition}[theorem]{\indent Definition}

\theoremstyle{remark}
\newtheorem{remark}[theorem]{\indent Remark}

\allowdisplaybreaks


\newcommand{\wlogic}{\textup{$\mathbf{iE}^{\mathsf{W}}$\thinspace}}
\newcommand{\wwlogic}{\textup{$\mathbf{iEC}^{\mathsf{W}}$\thinspace}}
\newcommand{\wwwlogic}{\textup{$\mathbf{iM}^{\mathsf{W}}$\thinspace}}
\newcommand{\wwwwlogic}{\textup{$\mathbf{iK}^{\mathsf{W}}$\thinspace}}

\newcommand{\alog}{\textup{$\Box$\textbf{Log}\thinspace}}
\newcommand{\blog}{\textup{$\Diamond$\textbf{Log}\thinspace}}
\newcommand{\clog}{\textup{$\blacksquare$\textbf{Log}\thinspace}}
\newcommand{\dlog}{\textup{$\bullet$\textbf{Log}\thinspace}}

\newcommand{\pn}{\textup{\textbf{pn}\thinspace}}
\newcommand{\wpn}{\textup{$\mathbf{iE}^{\mathsf{W}}$\textbf{pn}\thinspace}}
\newcommand{\wwpn}{\textup{$\mathbf{iEC}^{\mathsf{W}}$\textbf{pn}\thinspace}}
\newcommand{\wwwpn}{\textup{$\mathbf{iM}^{\mathsf{W}}$\textbf{pn}\thinspace}}
\newcommand{\wwwwpn}{\textup{$\mathbf{iK}^{\mathsf{W}}$\textbf{pn}\thinspace}}

\newcommand{\apn}{\textup{$\Box$\textbf{pn}\thinspace}}
\newcommand{\bpn}{\textup{$\Diamond$\textbf{pn}\thinspace}}
\newcommand{\cpn}{\textup{$\blacksquare$\textbf{pn}\thinspace}}
\newcommand{\dpn}{\textup{$\bullet$\textbf{pn}\thinspace}}

\newcommand{\canpn}{\textup{$\mathbf{iL}^{\mathsf{W}}$\textbf{can-pn}\thinspace}}
\newcommand{\wcpn}{\textup{$\mathbf{iE}^{\mathsf{W}}$\textbf{can-pn}\thinspace}}
\newcommand{\wwcpn}{\textup{$\mathbf{iEC}^{\mathsf{W}}$\textbf{can-pn}\thinspace}}
\newcommand{\wwwcpn}{\textup{$\mathbf{iM}^{\mathsf{W}}$\textbf{can-pn}\thinspace}}
\newcommand{\wwwwcpn}{\textup{$\mathbf{iK}^{\mathsf{W}}$\textbf{can-pn}\thinspace}}

\newcommand{\acanpn}{\textup{$\Box$\textbf{can-pn}\thinspace}}
\newcommand{\bcanpn}{\textup{$\Diamond$\textbf{can-pn}\thinspace}}
\newcommand{\ccanpn}{\textup{$\blacksquare$\textbf{can-pn}\thinspace}}
\newcommand{\dcanpn}{\textup{$\bullet$\textbf{can-pn}\thinspace}}

\newcommand{\ipc}{\textup{\textbf{IPC}\thinspace}}
\newcommand{\ntax}{\textup{$\mathtt{WE}$\thinspace}}
\newcommand{\wcax}{\textup{$\mathtt{WC}$\thinspace}}
\newcommand{\tax}{\textup{$\mathtt{T}$\thinspace}}

\newcommand{\rew}{\textup{$\mathtt{REW}$\thinspace}}
\newcommand{\wnec}{\textup{$\mathtt{RMW}$\thinspace}}
\newcommand{\modpon}{\textup{$\mathtt{MP}$\thinspace}}
\newcommand{\ext}{\textup{$\mathtt{RE}$\thinspace}}

\begin{document}
\null
\vskip 1.8truecm

\title[A note on the intuitionistic logic of false belief]
{A note on the intuitionistic \\ logic of false belief} 
\author{Tomasz Witczak}    
\address{Institute of Mathematics\\ Faculty of Science and Technology
\\ University of Silesia\\ Bankowa~14\\ 40-007 Katowice\\ Poland}
\email{tm.witczak@gmail.com} 


\begin{abstract}In this paper we analyse logic of false belief in intuitionistic setting. This logic, studied in its classical version by Steinsvold, Fan, Gilbert and Venturi, describes the following situation: a formula $\varphi$ is not satisfied in a given world, but we still believe in it (or we think that it should be accepted). Another interpretations are also possible: e.g. that we do not accept $\varphi$ but it is imposed on us by a kind of council or advisory board. From the mathematical point of view, the idea is expressed by an adequate form of modal operator $\mathsf{W}$ which is interpreted in relational frames with neighborhoods. We discuss monotonicity of forcing, soundness, completeness and several other issues. We present also some simple systems in which confirmation of previously accepted formula is modelled. 
\end{abstract}

\subjclass{Primary: 03B45, 03B20 ; Secondary: 03A10}
\keywords{Intuitionistic modal logic, non-normal modal logic, neighborhood semantics}


\maketitle


\section{Introduction}

Logic of false belief was studied e.g. by Steinsvold \cite{steinsvold}, Gilbert and Venturi \cite{gilbert} or Fan \cite{fan}. Those authors obtained several interesting results concerning completeness and expressivity. Their propositional systems were based on classical modal logics (i.e. with the law of the excluded middle). As for the semantics, they used relational (Kripke) and neighborhood frames.

In general, the idea is to describe the following situation: $\varphi$ is false but it is still believed (see \cite{gilbert}). This concept is expressed by the very definition of forcing: if $w$ is a possible world then $w \Vdash \mathsf{W}\varphi$ $\Leftrightarrow$ $w \nVdash \varphi$ and $V(\varphi) \in \mathcal{N}_{w}$. The fact that $\varphi$ is erroneously taken for true, is modeled by the second part of this definition: we see that $V(\varphi)$ is among our neighborhoods. 

Other interpretations are also possible. For example, we deny $\varphi$ (in a given world) but it is imposed on us by a kind of council or advisory board. We are \emph{encouraged} to accept $\varphi$, at least in a particular world, because the set of all worlds accepting $\varphi$ is one of our neighborhoods. This means that $V(\varphi)$ gathers worlds (and thus situations or circumstances) which are similar to our present situation, hence maybe we should rethink our opinion on $\varphi$. Also, we can identify possible worlds with different people, accepting (or not) various formulas. Then $w$-neighborhoods can be considered as (more or less) credible groups of advisors or lustrators. 

We can assume that our worlds are pre-ordered and if $w \leq v$, then $v$ accepts at least everything which was previously approved by $w$. Now our model becomes intuitionistic: we have persistence of truth (with respect to $\leq$). This approach will be studied in the present paper. We are interested mostly in completeness and monotonicity of forcing. We show several intuitionistic versions of classical false belief systems. We discuss restrictions which can or have to be imposed on neighborhoods. We also point out some subtle limitations and advantages of intuitionistic framework in the context of minimal, maximal and intermediate canonical models. 

Finally, we discuss four possible interpretations of intuitionistic modal logic with axiom $T$: again, in terms of confirmation or denial. One of this interpretations leads us to the logic of unknown truths. We show some general ideas, difficulties and suppositions.

\section{Logic of false belief}

\subsection{Alphabet and language}

Our logic is propositional, without quantifiers. We introduce alphabet of our language below. 

\begin{definition}
\wlogic-alphabet consists of:
\begin{enumerate}
\item $PV$ which is a fixed denumerable set of propositional variables $p, q, r, s, ...$.
\item Common logical connectives which are $\land$, $\lor$ and $\rightarrow$.
\item The only derived connective which is $\lnot$ (thus $\lnot \varphi$ is a shortcut for $\varphi \rightarrow \bot$).
\item One modal operator: $\mathsf{W}$. 
\end{enumerate}
\end{definition}

Well-formed formulas are build recursively in a standard style: if $\varphi$, $\psi$ are \textit{wff's} then also $\varphi \lor \psi$, $\varphi \land \psi$, $\varphi \rightarrow \psi$ and $\mathsf{W} \varphi$. Note that $\Leftarrow, \Rightarrow$ and $\Leftrightarrow$ are used only on the level of meta-language (which is classical).

\subsection{Structures and models}

Our initial structure is a pre-ordered neighborhood frame (\pn-frame) defined as it follows:

\begin{definition}
\label{pndef1}
\pn-frame is a triple $F = \langle W, \mathcal{N}, \leq \rangle$ where $\leq$ is a partial order on $W$ and $\mathcal{N}$ is a function from $W$ into $P(P(W))$.
\end{definition}

This definition is very general and it does not provide any relationship between $\leq$ and $\mathcal{N}$. In particular, it will not allow us to speak about monotonicity of forcing with respect to modal formulas. Thus, we shall introduce a particular subclass of \pn-frames.

\begin{definition}
\label{pndef2}
\quad \\
\wpn-frame is a \pn-frame with the following additional restriction:
\begin{equation}
\label{cond1}
[w \leq v, X \in \mathcal{N}_{w}, v \notin X] \Rightarrow X \in \mathcal{N}_{v}.
\end{equation}
\end{definition}

Having structures with appropriate features, we may introduce the notion of model. The first one is general and can be considered as a pattern for the further development of particular models.

\begin{definition}
\label{moddef1}
\pn-model is a quadruple $M = \langle W, \mathcal{N}, \leq, V \rangle$ where $\langle W, \mathcal{N}, \leq \rangle$ is \pn-frame and $V$ is a function from $PV$ into $P(W)$ such that: if $w \in V(q)$ and $w \leq v$ then $v \in V(q)$.
\end{definition}

\begin{definition}
\label{forcedef1}
For every \pn-model $M = \langle W, \mathcal{N}, \leq, V \rangle$, forcing of formulas in a world $w \in W$ is defined inductively:

\begin{enumerate}
\item $w \nVdash \bot$.

\item $w \Vdash q$ $\Leftrightarrow$ $w \in V(q)$ for any $q \in PV$.

\item $w \Vdash \varphi \land \psi$ (resp. $\varphi \lor \psi$) $\Leftrightarrow$ $w \Vdash \varphi$ and (resp. or) $w \Vdash \psi$.

\item $w \Vdash \varphi \rightarrow \psi$ $\Leftrightarrow$ $v \nVdash \varphi$ or $v \Vdash \psi$ for each $v \in W$ such that $w \leq v$.
\end{enumerate}
\end{definition}

We do not make difference between primal valuation and its extended version, using only one symbol $V$. Let us use shortcut $V(\varphi)$ for $\{z \in W; z \Vdash \varphi\}$.

\begin{remark}
Of course, the definition above allows us to say that $w \Vdash \lnot \varphi$ $\Leftrightarrow$ for any $v \geq w$, $v \nVdash \varphi$. 
\end{remark}

Again, we narrow down our initial definition:

\begin{definition}
\label{pndef3}
\wpn-model is a \pn-model with valuation and forcing of non-modal formulas defined just like in Def. \ref{forcedef1} but with an additional clause:

$w \Vdash \mathsf{W} \varphi$ $\Leftrightarrow$ $w \Vdash \lnot \varphi$ and $V(\varphi) \in \mathcal{N}_{w}$.
\end{definition}

\subsection{Monotonicity of forcing}

Here we prove the following fact:

\begin{theorem}
\label{theo1}
In every \wpn-model $M = \langle W, \mathcal{N}, \leq, V \rangle$ the following holds: if $w \Vdash \gamma$ and $w \leq v$, then $v \Vdash \gamma$.
\end{theorem}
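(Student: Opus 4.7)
The plan is to proceed by structural induction on the formula $\gamma$, with the modal case being where the specific condition (\ref{cond1}) of \wpn-frames earns its keep.

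For the base cases, $\gamma = \bot$ is vacuous since nothing forces $\bot$, and $\gamma = q \in PV$ is precisely the persistence clause built into Definition \ref{moddef1}. For the propositional connectives, I would handle $\land$ and $\lor$ by direct appeal to the inductive hypothesis on the two subformulas. For $\gamma = \varphi \rightarrow \psi$, I would use transitivity of $\leq$: assuming $w \Vdash \varphi \rightarrow \psi$ and $w \leq v$, any $u \geq v$ satisfies $u \geq w$, so the implication is forced at $u$ as required. These steps are entirely routine and do not rely on the neighborhood structure.

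The main case is $\gamma = \mathsf{W}\varphi$. Suppose $w \Vdash \mathsf{W}\varphi$ and $w \leq v$; by Definition \ref{pndef3}, this means $w \Vdash \lnot \varphi$ and $V(\varphi) \in \mathcal{N}_w$. First I would note that $v \Vdash \lnot \varphi$: by the remark following Definition \ref{forcedef1}, $w \Vdash \lnot\varphi$ says that no $u \geq w$ forces $\varphi$, and since every $u \geq v$ also satisfies $u \geq w$, the same holds at $v$. In particular, $v \nVdash \varphi$, i.e., $v \notin V(\varphi)$. Now I apply condition (\ref{cond1}) with $X := V(\varphi)$: we have $w \leq v$, $V(\varphi) \in \mathcal{N}_w$, and $v \notin V(\varphi)$, so the condition delivers $V(\varphi) \in \mathcal{N}_v$. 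Combined with $v \Vdash \lnot \varphi$, this gives $v \Vdash \mathsf{W}\varphi$.

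The only subtle point, and the reason Definition \ref{pndef2} is phrased with the side condition $v \notin X$ rather than unconditionally, is that this side condition is automatically available for $\mathsf{W}$-formulas: the very meaning of $\mathsf{W}\varphi$ forces $\varphi$ to fail at $w$, and persistence of $\lnot\varphi$ (which has already been established via the implication clause) transports this failure to $v$. So no independent obstacle arises; the proof essentially packages monotonicity of $\lnot\varphi$ together with the frame condition (\ref{cond1}).
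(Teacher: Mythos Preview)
Your proof is correct and follows essentially the same route as the paper: the paper treats only the modal case and argues exactly as you do, observing that $v \Vdash \lnot\varphi$ (hence $v \notin V(\varphi)$) and then invoking Condition~(\ref{cond1}) to transfer $V(\varphi)$ into $\mathcal{N}_v$. Your write-up is more complete in that it spells out the non-modal inductive cases and adds the helpful remark that the side condition $v \notin X$ is automatically available via persistence of $\lnot\varphi$.
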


\begin{proof}
We shall discuss only the modal case. Assume that $\gamma = \mathsf{W} \varphi$, $w, v \in W$, $w \leq v$ and $w \Vdash \gamma$. Hence, $w \Vdash \lnot \varphi$ and $V(\varphi) \in \mathcal{N}_{w}$. Of course $v \Vdash \lnot \varphi$. In particular, it means that $v \notin V(\varphi) \in \mathcal{N}_{w}$. Now Cond. $\ref{cond1}$ allows us to say that $V(\varphi) \in \mathcal{N}_{v}$. Hence, $w \Vdash \mathsf{W} \varphi$.  
\end{proof}

There is a difference between our definition of $\mathsf{W}$ and the one presented in \cite{fan} or \cite{gilbert}. Their operator was defined as follows: $w \Vdash \mathsf{W}\varphi$ $\Leftrightarrow$ $w \nVdash \varphi$ and $V(\varphi) \in \mathcal{N}_{w}$. However, their framework was classical, so there was no difference between lack of acceptance and acceptance of negation. In our intuitionistic setting, this approach would be problematic: if $w \nVdash \varphi$, then it does not mean that $\varphi$ is denied in each (or even in one) world $v$ placed above $w$. In fact, there is no reason for it: this is the whole difference between classical and intuitionistic negation, at least from the semantical point of view. Hence, we had to modify interpretation of $\mathsf{W}$.

\subsection{Axiomatization}

In this subsection we present sound and complete axiomatization of our basic system.

\begin{definition}
\label{axio1}
\wlogic is defined as the smallest set of formulas containing $\ipc \cup \{\ntax\}$ and closed under the following set of inference rules: $\{\modpon, \rew\}$, where:

\begin{enumerate}
\item \ipc is the set of all intuitionistic axiom schemes and their modal instances (i.e. $\mathsf{W}$-instances).

\item \ntax is the axiom scheme $\mathsf{W} \varphi \rightarrow \lnot \varphi$.

\item \rew is the \textit{rule of extensionality}: $\varphi \leftrightarrow \psi \vdash \mathsf{W} \varphi \leftrightarrow \mathsf{W} \psi$. 

\item \modpon is the rule \emph{modus ponens}: $\varphi, \varphi \to \psi \vdash \psi$. 
\end{enumerate}
\end{definition}

The notion of syntactic consequence (i.e. $\vdash$) is rather standard: if $\Gamma$ is a set of \wlogic-formulas, then $w \vdash \varphi$ \emph{iff} $\varphi$ can be obtained from the finite subset of $\Gamma$ by using axioms of \wlogic and both inference rules. Clearly, if $\varphi \in \Gamma$, then $\Gamma \vdash \varphi$. The same concept of $\vdash$ will be accepted in further systems.

The following theorem holds (and is simple to prove):

\begin{theorem}
\label{sound1}
\wlogic is sound with respect to the class of all \wpn-frames.
\end{theorem}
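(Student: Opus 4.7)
The plan is a routine induction on the length of \wlogic-derivations, with the four cases one expects: intuitionistic axiom schemes (including their modal instances), the axiom \ntax, and the two inference rules \modpon and \rew. The key structural fact that lets the intuitionistic skeleton work is Theorem \ref{theo1}: persistence of forcing along $\leq$ for arbitrary \wlogic-formulas. This is precisely what guarantees that the standard Kripke-style validation of IPC axioms carries over verbatim, even when the propositional variables are replaced by $\mathsf{W}$-formulas.

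First I would verify the base case. For the intuitionistic axiom schemes, I would note that their validity in any preordered Kripke model depends only on the definition of forcing for $\land,\lor,\to,\bot$ together with persistence; since Theorem \ref{theo1} extends persistence to every \wlogic-formula in a \wpn-model, the usual validity proofs apply uniformly to all modal instances. For \ntax, the argument is immediate from the forcing clause in Definition \ref{pndef3}: fix any $w$ and any $v \geq w$ with $v \Vdash \mathsf{W}\varphi$; by the clause, $v \Vdash \lnot\varphi$, so $w \Vdash \mathsf{W}\varphi \to \lnot\varphi$. No structural hypothesis on $\mathcal{N}$ beyond what is already in the definition is needed here.

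Next, the inductive step for the rules. \modpon is standard: if $w \Vdash \varphi$ and $w \Vdash \varphi \to \psi$, apply the latter at $v=w$ (using reflexivity of $\leq$) to conclude $w \Vdash \psi$. For \rew, suppose $\varphi \leftrightarrow \psi$ is valid in every \wpn-model; then in any fixed model $V(\varphi)=V(\psi)$ as subsets of $W$, so $V(\varphi)\in \mathcal{N}_w$ iff $V(\psi)\in \mathcal{N}_w$, and $w\Vdash \lnot\varphi$ iff $w\Vdash \lnot\psi$ at every $w$. Combining these two equivalences with the forcing clause for $\mathsf{W}$ gives $w\Vdash \mathsf{W}\varphi$ iff $w\Vdash \mathsf{W}\psi$ at every $w$, whence $\mathsf{W}\varphi \leftrightarrow \mathsf{W}\psi$ is valid.

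I do not expect any genuine obstacle: the only potentially delicate point is that \ipc has been declared to include modal instances of the schemes, so one must be sure that the usual validity argument does not secretly rely on $\varphi$ being purely propositional. It does not, provided persistence holds for every \wlogic-formula, which is exactly the content of Theorem \ref{theo1}. Consequently soundness follows, and the whole argument can be written up in a few lines once that observation is made explicit.
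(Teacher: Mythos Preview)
Your proposal is correct and is exactly the standard argument the paper has in mind: the paper does not actually supply a proof of Theorem~\ref{sound1}, merely remarking that it ``is simple to prove,'' so what you have written is precisely the routine verification the reader is expected to fill in. Your isolation of Theorem~\ref{theo1} as the ingredient that makes modal instances of \ipc-schemes go through is the one nontrivial observation, and you have handled \ntax, \modpon and \rew correctly.
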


We can briefly prove completeness of our system with respect to the appropriate class of frames. For brevity, we assume that the reader is aware of the fact that each consistent \wlogic-theory can be extended to the prime theory (this is just an intuitionistic version of the well-known Lindenbaum lemma). Hence, we may go directly to the canonical model. We use the following shortcut: $\widehat{\varphi} = \{z \in W; \varphi \in z\}$. 

For brevity, we start from the general pattern that will be used many times later. 

\begin{definition}
\label{can0}
\canpn-model is a triple $\langle W, \leq, \mathcal{N}, \leq, V \rangle$ where $\mathbf{L}$ may be any logic expressed in \wlogic-language, and:

\begin{enumerate}
\item $W$ is the set of all prime theories of the logic $\mathbf{L}$.

\item For every $w, v \in W$ we say that $w \leq v$ \textit{iff} $w \subseteq v$.

\item $\mathcal{N}$ is a function from $W$ into $P(P(W))$.

\item $V: PV \rightarrow P(W)$ is a function defined as it follows: $w \in V(q) \Leftrightarrow q \in w$.

\end{enumerate}
\end{definition}

Now we may deal with the first particular case:

\begin{definition}
\label{can1}
\wcpn-model is an \canpn-model where $\mathbf{L} = \wlogic$ and for every $w \in W$ and for each formula $\varphi$:

$\mathcal{N}_{w} = \{\widehat{varphi}; \mathsf{W}\varphi \in w\}$. 

\end{definition}

We need the following lemma:

\begin{lemma}
\label{canmonot1}
\wcpn-model is indeed an \wpn-model. 
\end{lemma}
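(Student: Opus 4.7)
The plan is to verify that the canonical frame $\langle W, \mathcal{N}, \leq\rangle$ underlying an \wcpn-model satisfies the structural condition (\ref{cond1}) of Definition \ref{pndef2}. Everything else (that $\leq$ is a partial order and that $\mathcal{N}$ is a function $W \to \mathcal{P}(\mathcal{P}(W))$) is immediate from the way \canpn-model was set up in Definition \ref{can0}, so the entire content of the lemma concentrates on this single implication.

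First I would fix witnesses: take $w, v \in W$ with $w \leq v$ and $X \in \mathcal{N}_{w}$, together with the assumption $v \notin X$ (even though I expect not to use it). Unfolding the definition of $\mathcal{N}_{w}$ in Def.\ \ref{can1}, there exists a formula $\varphi$ such that $X = \widehat{\varphi}$ and $\mathsf{W}\varphi \in w$. Since $w \leq v$ means $w \subseteq v$, the formula $\mathsf{W}\varphi$ itself is an element of $v$. But then, reading off the definition of $\mathcal{N}_{v}$ in the other direction, $\widehat{\varphi} \in \mathcal{N}_{v}$, i.e.\ $X \in \mathcal{N}_{v}$, which is precisely what (\ref{cond1}) demands.

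The only thing worth flagging is that the hypothesis $v \notin X$ is in fact not invoked anywhere: once the neighborhoods are indexed syntactically by formulas of the shape $\mathsf{W}\varphi$ belonging to the theory, membership propagates trivially along $\subseteq$. One could further remark (as a sanity check, not as part of the verification) that $v \notin X$ is anyway consistent with the setup, since $\mathsf{W}\varphi \in w \subseteq v$ together with axiom \ntax forces $\lnot \varphi \in v$ and hence $\varphi \notin v$, so $v \notin \widehat{\varphi} = X$. There is no real obstacle here; the lemma is essentially a definition-chasing observation, and the main point of stating it is to prepare the ground for the truth lemma and the completeness proof to follow.
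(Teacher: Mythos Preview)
Your proof is correct and matches the paper's own argument essentially line for line: assume $w\subseteq v$ and $X\in\mathcal{N}_{w}$, write $X=\widehat{\varphi}$ with $\mathsf{W}\varphi\in w$, pass $\mathsf{W}\varphi$ to $v$ by inclusion, and conclude $X\in\mathcal{N}_{v}$. Your observation that the hypothesis $v\notin X$ is never used is exactly the content of the paper's Remark~\ref{rem1}, and your additional sanity check (that $v\notin X$ is in any case forced by \ntax) is a harmless extra not present in the paper.
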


\begin{proof}
In fact, we must check that the monotonicity holds. Let us assume that $w \subseteq v$ and $v \notin X \in \mathcal{N}_{w}$. Now $X = \widehat{\varphi}$ for certain $\varphi$ such that $\mathsf{W} \varphi \in w$. However, $w$ is contained in $v$, hence $\mathsf{W} \varphi \in v$. Thus $\widehat{\varphi} \in \mathcal{N}_{v}$. 
\end{proof}

\begin{remark}
\label{rem1}
Note that we did not use the fact that $v \notin X$. Actually, it was not important. For this reason, we may say that \wcpn-model satisfies even stronger restriction, namely:

\begin{equation}
\label{cond2}
[w \leq v, X \in \mathcal{N}_{w}] \Rightarrow X \in \mathcal{N}_{v}.
\end{equation}
Clearly, our completeness theorem will be true for this smaller class of frames (models). However, we may be interested not only in narrowing down these classes for which completeness result can be proved, but also in broadening those which are \emph{sufficient} for the monotonicity of forcing. 
\end{remark}

Our neighborhood function is well-defined. First, if we assume that $W$ is a collection of all prime theories and $\widehat{\varphi} = \widehat{\psi}$, then we can easily prove that $\vdash \varphi \leftrightarrow \psi$ (using only non-modal tools). Second, assume that $\widehat{\varphi} \in \mathcal{N}_{w}$ and $\widehat{\varphi} = \widehat{\psi}$. If $\widehat{\varphi} \in \mathcal{N}_{w}$, then $\mathsf{W}\varphi \in w$. But $\varphi \leftrightarrow \psi \in \wlogic$ (as we know from the first point of these considerations). Now, by means of $\rew$, $\mathsf{W}\varphi \leftrightarrow \mathsf{W}\psi \in \wlogic \subseteq w$. By \modpon, $\mathsf{W}\psi \in w$. 

Our expected theorem about properties of the canonical model is below:

\begin{theorem}
\label{th1}
Let $M = \langle W, \leq, \mathcal{N}, V \rangle$ be a \wcpn-model. Then for each $\gamma$ and for each $w \in W$ the following holds: $w \Vdash \gamma \Leftrightarrow \gamma \in w$.
\end{theorem}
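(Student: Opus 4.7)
The proof proceeds by induction on the complexity of $\gamma$. The base cases ($\bot$ and propositional variables) are immediate: $\bot \notin w$ by consistency of prime theories and $\bot$ is never forced; for $q \in PV$, the equivalence is the very definition of $V$ in Def.~\ref{can0}. The cases $\gamma = \varphi \land \psi$ and $\gamma = \varphi \lor \psi$ follow routinely from the inductive hypothesis together with the closure and primeness properties of $w$. The implication case $\gamma = \varphi \to \psi$ is the standard intuitionistic one: the right-to-left direction uses the fact that $\subseteq$-extensions of $w$ are closed under modus ponens, and the left-to-right direction uses the Lindenbaum lemma to extend $w \cup \{\varphi\}$ to a prime theory $v \supseteq w$ containing $\varphi$ but not $\psi$ whenever $\varphi \to \psi \notin w$.

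The only genuinely new case is $\gamma = \mathsf{W}\varphi$. For the direction ``$\mathsf{W}\varphi \in w \Rightarrow w \Vdash \mathsf{W}\varphi$'', I would argue as follows. Since $\mathsf{W}\varphi \in w$, by the definition of the canonical neighborhood function we have $\widehat{\varphi} \in \mathcal{N}_w$. The inductive hypothesis applied to $\varphi$ gives $\widehat{\varphi} = V(\varphi)$, hence $V(\varphi) \in \mathcal{N}_w$. For the negation clause, the axiom \ntax{} together with \modpon{} yields $\lnot \varphi \in w$; since every prime $v \supseteq w$ also contains $\lnot \varphi$, the inductive hypothesis applied to $\varphi$ forces $v \nVdash \varphi$ for all such $v$, i.e.\ $w \Vdash \lnot \varphi$. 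Both conjuncts of Def.~\ref{pndef3} are satisfied.

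For the converse direction, assume $w \Vdash \mathsf{W}\varphi$. Then $V(\varphi) \in \mathcal{N}_w$, so by the definition of $\mathcal{N}_w$ there exists some $\psi$ with $\mathsf{W}\psi \in w$ and $V(\varphi) = \widehat{\psi}$. By the inductive hypothesis $V(\varphi) = \widehat{\varphi}$, hence $\widehat{\varphi} = \widehat{\psi}$. Invoking the fact established in the paragraph preceding the theorem (equal extensions among prime theories entail provable equivalence), we obtain $\vdash \varphi \leftrightarrow \psi$. The rule \rew{} then gives $\vdash \mathsf{W}\varphi \leftrightarrow \mathsf{W}\psi$, and since $w$ is closed under \modpon, $\mathsf{W}\psi \in w$ yields $\mathsf{W}\varphi \in w$.

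The main subtlety lies in this last converse direction for the modal case: one must resist the temptation to read $V(\varphi) \in \mathcal{N}_w$ directly as ``$\mathsf{W}\varphi \in w$'', because the canonical $\mathcal{N}_w$ is specified via an arbitrary representative $\psi$ whose extension happens to equal $\widehat{\varphi}$. Bridging this gap is precisely where \rew{} and the well-definedness observation about $\widehat{\cdot}$ are indispensable; the rest of the argument is the usual intuitionistic Henkin-style bookkeeping.
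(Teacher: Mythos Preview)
Your proof is correct and follows essentially the same route as the paper: induction on complexity, with the Boolean and intuitionistic implication cases handled in the standard way, and the modal case split into the two directions using \ntax, the induction hypothesis on $\varphi$, and the definition of the canonical neighbourhood. The only difference is cosmetic: in the $(\Rightarrow)$ direction the paper writes simply ``by the very definition of canonical neighbourhood, $\mathsf{W}\varphi \in w$'', tacitly relying on the well-definedness paragraph preceding the theorem, whereas you unfold that step explicitly by picking a representative $\psi$ and invoking \rew---which is exactly the content of that paragraph.
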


\begin{proof}
Boolean cases are simple (of course we should remember that implication is intuitionistic). As for the modal case, let us assume that $\gamma = \mathsf{W}\varphi$. 

$(\Rightarrow)$

Assume that $w \Vdash \gamma$. Hence, $w \Vdash \lnot \varphi$ and $V(\varphi) \in \mathcal{N}_{w}$. By induction hypothesis $\widehat{\varphi} \in \mathcal{N}_{w}$. But then, by the very definition of canonical neighborhood, $\mathsf{W}\varphi \in w$. 

$(\Leftarrow)$

Let $\mathsf{W}\varphi \in w$. By means of \ntax and \modpon we infer that $\lnot \varphi \in w$. This is Boolean case: we are able to prove in a standard manner\footnote{We may use the fact that $\lnot \varphi$ can be written as $\varphi \to \bot$.} that $w \Vdash \lnot \varphi$. From the definition of canonical neighborhood we have that $\widehat{\varphi} \in W$ which means (here we use induction hypothesis) that $V(\varphi) \in \mathcal{N}_{w}$. We sum up our results to say that $w \Vdash \mathsf{W}\varphi$. 
\end{proof}

Now it is easy to formulate theorem about completeness:

\begin{theorem}
\wlogic is strongly complete with respect to the class of all \wpn-frames; and also with respect to those in which neighborhood function satisfies Cond. \ref{cond2}.
\end{theorem}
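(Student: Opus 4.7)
My plan is to derive strong completeness as a direct corollary of the canonical model construction already set up. Suppose $\Gamma \nvdash \varphi$ in \wlogic. I would first invoke the (intuitionistic) Lindenbaum-type lemma mentioned earlier to extend $\Gamma$ to a prime theory $w^{*}$ with the property that $\varphi \notin w^{*}$; the consistency of $\Gamma \cup \{\psi : \Gamma \vdash \psi\}$ together with $\varphi \notin$-closure gives a prime superset avoiding $\varphi$, in the standard intuitionistic way.

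Next I would place $w^{*}$ inside the \wcpn-model $M = \langle W, \leq, \mathcal{N}, V \rangle$ from Def.~\ref{can1}. By Lemma~\ref{canmonot1} this is a genuine \wpn-model, so $M$ lives in the class of frames for which soundness was established. Applying the truth lemma (Theorem~\ref{th1}) at $w^{*}$: for every $\psi \in \Gamma \subseteq w^{*}$ we get $w^{*} \Vdash \psi$, while $\varphi \notin w^{*}$ yields $w^{*} \nVdash \varphi$. Thus $\Gamma$ is satisfied at $w^{*}$ but $\varphi$ is not, giving strong completeness with respect to the class of all \wpn-frames.

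For the second half of the statement, I would not redo the argument but simply appeal to Remark~\ref{rem1}: the canonical neighborhood function satisfies the stronger Cond.~\ref{cond2}, because membership $\widehat{\varphi} \in \mathcal{N}_{w}$ is equivalent to $\mathsf{W}\varphi \in w$, and $w \subseteq v$ transfers this membership to $v$ regardless of whether $v \in \widehat{\varphi}$. Hence the same countermodel $M$ already lies in the narrower class, so completeness holds relative to it as well.

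The only potentially delicate step is the prime extension that avoids $\varphi$; everything else is bookkeeping on top of Lemma~\ref{canmonot1} and Theorem~\ref{th1}. Since the paper explicitly grants the intuitionistic Lindenbaum lemma, there is in fact no real obstacle, and the proof reduces to stitching these pieces together.
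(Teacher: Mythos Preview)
Your proposal is correct and follows essentially the same route as the paper: the paper's own proof is only a two-line sketch invoking the intuitionistic Lindenbaum extension to a prime theory avoiding $\varphi$, then the canonical model and Theorem~\ref{th1}, with Remark~\ref{rem1} covering the Cond.~\ref{cond2} clause. Your write-up simply makes these steps explicit, so there is nothing to add or correct.
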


The proof of the theorem above is standard and based on the assumption that $w$ is a theory and $w \nvdash \varphi$. The idea is to show that there is a prime theory $v$ in a canonical model such that $w \subseteq v$ and $w \nVdash \varphi$. 

One could say that our logic is not a proper system of "false belief" because it is (in its modal aspect) much weaker than some systems studied in \cite{steinsvold}, \cite{gilbert} and \cite{fan}. This will be discussed in the next subsection.

\subsection{Stronger systems of false belief}

It is not difficult to add one very natural axiom to our initial kit, namely \wcax: $(\mathsf{W}\varphi \land \mathsf{W}\psi) \to \mathsf{W}(\varphi \land \psi)$. Here are necessary definitions:

\begin{definition}
\label{axio2}
\wwlogic is defined as $\wlogic \cup \{\wcax\}$. 
\end{definition}

\begin{definition}
\label{pndef4}
\wwpn-model is defined as \wpn-model with one additional clause (the one of \emph{closure under binary intersections}):
\begin{equation}
\label{cond4}
[X, Y \in \mathcal{N}_{w}] \Rightarrow X \cap Y \in \mathcal{N}_{w}. 
\end{equation}
\end{definition}

Canonical model for \wwlogic (i.e. \wwcpn-model) is defined exactly in the same way as \wcpn-model (but its worlds are prime theories of \wwlogic). Thus, we should only prove the following lemma:

\begin{lemma}
\label{canmonot2}
\wwcpn-model is indeed an \wwpn-model.
\end{lemma}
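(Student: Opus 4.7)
The plan is to verify the two closure conditions required of an \wwpn-model: the monotonicity condition (\ref{cond1}) inherited from the \wpn-frame definition, and the new condition (\ref{cond4}) of closure under binary intersections. Monotonicity essentially comes for free: the proof of Lemma \ref{canmonot1} used only the structural shape of the canonical neighborhoods (each $X \in \mathcal{N}_w$ is $\widehat{\varphi}$ for some $\varphi$ with $\mathsf{W}\varphi \in w$) together with upward closure of membership in prime theories ($w \subseteq v$ gives $\mathsf{W}\varphi \in v$). Both ingredients survive verbatim when the background logic is strengthened from \wlogic to \wwlogic, so (\ref{cond1}) is immediate.

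The substantive part of the lemma is therefore (\ref{cond4}). I would pick arbitrary $X, Y \in \mathcal{N}_w$ and unfold them as $X = \widehat{\varphi}$, $Y = \widehat{\psi}$ with $\mathsf{W}\varphi, \mathsf{W}\psi \in w$. Since $w$ is a (prime) theory of \wwlogic, it contains the axiom instance $(\mathsf{W}\varphi \land \mathsf{W}\psi) \to \mathsf{W}(\varphi \land \psi)$; a single application of \modpon to $\mathsf{W}\varphi \land \mathsf{W}\psi \in w$ yields $\mathsf{W}(\varphi \land \psi) \in w$. By the defining clause of canonical neighborhoods this gives $\widehat{\varphi \land \psi} \in \mathcal{N}_w$.

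To conclude I must identify $\widehat{\varphi \land \psi}$ with $X \cap Y$. For any prime theory $z$, $\varphi \land \psi \in z$ iff both $\varphi \in z$ and $\psi \in z$ (the forward direction by the intuitionistic projections $\varphi \land \psi \to \varphi$ and $\varphi \land \psi \to \psi$, the converse by $\varphi \to (\psi \to \varphi \land \psi)$, in each case applying \modpon inside $z$). Hence $\widehat{\varphi \land \psi} = \widehat{\varphi} \cap \widehat{\psi} = X \cap Y$, so $X \cap Y \in \mathcal{N}_w$ as required.

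There is no real obstacle here: the only thing to be careful about is that everything is done using the intuitionistic base, but since \wcax is a straight axiom scheme and the $\widehat{\varphi \land \psi} = \widehat{\varphi} \cap \widehat{\psi}$ identity relies only on the conjunction rules of \ipc, no additional classical reasoning is smuggled in. One could also note, in the spirit of Remark \ref{rem1}, that the argument actually delivers the stronger intersection closure even before invoking the constraint $v \notin X$, so the canonical model in fact satisfies (\ref{cond2}) together with (\ref{cond4}).
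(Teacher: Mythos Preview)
Your proof is correct and follows essentially the same route as the paper: unfold $X,Y\in\mathcal{N}_w$ as proof-sets $\widehat{\varphi},\widehat{\psi}$ with $\mathsf{W}\varphi,\mathsf{W}\psi\in w$, apply \wcax\ to obtain $\mathsf{W}(\varphi\land\psi)\in w$, and use $\widehat{\varphi\land\psi}=\widehat{\varphi}\cap\widehat{\psi}$ to conclude. Your treatment is a bit more explicit (you spell out why monotonicity carries over from Lemma~\ref{canmonot1} and why the proof-set identity for conjunction holds), but there is no substantive difference.
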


\begin{proof}
This is simple. Assume that $X, Y \in \mathcal{N}_{w}$. Hence, $X = \widehat{\varphi}$ and $Y = \widehat{\psi}$ (for certain $\varphi$ and $\psi$ such that $\mathsf{W}\varphi \in w$ and $\mathsf{W}\psi \in w$). Then $X \cap Y = \widehat{\varphi} \cap \widehat{\psi} = \widehat{\varphi \land \psi}$. At the same time, we use axiom \wcax to say that $\mathsf{W}(\varphi \land \psi) \in w$. Thus $X \cap Y \in \mathcal{N}_{w}$. 
\end{proof}

Now we can say that:
\begin{theorem}
\wwlogic is strongly complete with respect to the class of all \wwpn-frames (and those \wwpn-frames which satisfy Cond. \ref{cond2}).
\end{theorem}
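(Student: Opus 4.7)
The plan is to replay the canonical-model construction used for \wlogic, with \wwlogic-prime theories replacing \wlogic-prime theories. Given a set $\Gamma$ of \wwlogic-formulas and a formula $\varphi$ with $\Gamma \nvdash \varphi$, I would first apply the intuitionistic Lindenbaum lemma to extend $\Gamma$ to a prime theory $w$ of \wwlogic with $\varphi \notin w$. This extension is available in the usual way because \wwlogic contains \ipc; the extra axiom \wcax is just one more scheme that remains consistent with $\Gamma$, so it does not interfere with the standard prime-extension procedure.

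Next I would work inside the \wwcpn-model. By Lemma \ref{canmonot2} this is a genuine \wwpn-model, and by the same reasoning used in Remark \ref{rem1} it actually satisfies the stronger Cond.\ \ref{cond2}. Well-definedness of the canonical neighborhood function is inherited from the \wcpn case: if $\widehat{\varphi} = \widehat{\psi}$ then $\vdash \varphi \leftrightarrow \psi$ using only propositional tools, and then \rew together with \modpon transfer $\mathsf{W}\varphi \in w$ to $\mathsf{W}\psi \in w$.

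The heart of the argument is the truth lemma: for every formula $\gamma$ and every $w$ in the canonical model, $w \Vdash \gamma$ if and only if $\gamma \in w$. The induction is identical to the one in Theorem \ref{th1}; the modal case $\gamma = \mathsf{W}\varphi$ uses the definition of canonical neighborhoods in one direction and \ntax combined with the induction hypothesis in the other. Crucially, no step in that induction invokes \wcax, so it transfers verbatim. From the truth lemma, $\Gamma \subseteq w$ gives $w \Vdash \gamma$ for every $\gamma \in \Gamma$, while $\varphi \notin w$ gives $w \nVdash \varphi$, refuting semantic consequence of $\varphi$ from $\Gamma$ over the relevant frame class.

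The only genuinely new ingredient compared with the \wlogic completeness proof is ensuring that the canonical model is closed under binary intersections of neighborhoods, and this has already been carried out in Lemma \ref{canmonot2} using \wcax. For this reason I do not expect any serious obstacle; the construction yields strong completeness simultaneously with respect to the class of all \wwpn-frames and with respect to the subclass defined by Cond.\ \ref{cond2}.
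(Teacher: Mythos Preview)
Your proposal is correct and follows exactly the approach the paper takes: the paper gives no separate proof of this theorem, relying instead on the fact that the \wwcpn-model is defined just like the \wcpn-model (so Theorem~\ref{th1} and the subsequent completeness argument carry over verbatim), with Lemma~\ref{canmonot2} supplying the one new ingredient, closure under binary intersections. Your observation that the canonical model also satisfies Cond.~\ref{cond2} matches the paper's parenthetical claim and its earlier Remark~\ref{rem1}.
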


Let us introduce another system: it will be an intuitionistic version of $\mathbf{M}^{\mathsf{W}}$ studied in \cite{fan}.\footnote{Precisely speaking, Fan used axiom $\mathsf{W}(\varphi \land \psi) \land \lnot \psi \to \mathsf{W}\psi$. The rule \rew can be derived from this axiom and \ntax.}

\begin{definition}
\label{axio2}
\wwwlogic is defined as $\wlogic \cup \{\wnec\}$, where:

\begin{enumerate}
\item \wnec is the rule $\varphi \to \psi \vdash (\mathsf{W}\varphi \land \lnot \psi) \to \mathsf{W}\psi)$. 
\end{enumerate}
\end{definition}

We introduce new kind of models:

\begin{definition}
\wwwpn-model is an \wpn-model with one additional clause (the one of \emph{supplementation}):
\begin{equation}
\label{cond3}
[X \in \mathcal{N}_{w}, X \subseteq Y] \Rightarrow Y \in \mathcal{N}_{w}.
\end{equation}
\end{definition}

\begin{lemma}
\wwwlogic is sound with respect to the class of all \wwwpn-models. 
\end{lemma}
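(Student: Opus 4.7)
The approach is to reduce to Theorem \ref{sound1}. Every \wwwpn-model is in particular a \wpn-model (the supplementation clause \eqref{cond3} is an additional constraint imposed on top of \eqref{cond1}), so all axioms of \ipc together with \ntax remain valid in every \wwwpn-model, and both \modpon and \rew still preserve validity by the standard argument. Hence the only thing left to verify is that the new rule \wnec preserves validity over the class of \wwwpn-models.

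Concretely, I would fix a \wwwpn-model $M = \langle W, \mathcal{N}, \leq, V \rangle$ and assume as induction hypothesis that $\varphi \to \psi$ is valid in $M$, i.e.\ $u \Vdash \varphi \to \psi$ for every $u \in W$. By the forcing clause for intuitionistic implication this is equivalent to the set-theoretic inclusion $V(\varphi) \subseteq V(\psi)$. I then pick an arbitrary world $w \in W$ and show $w \Vdash (\mathsf{W}\varphi \land \lnot \psi) \to \mathsf{W}\psi$: take any $v \geq w$ with $v \Vdash \mathsf{W}\varphi \land \lnot \psi$. The second conjunct already gives the negation requirement demanded by Def.\ \ref{pndef3} for $\mathsf{W}\psi$. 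From $v \Vdash \mathsf{W}\varphi$ we get $V(\varphi) \in \mathcal{N}_{v}$. Combining $V(\varphi) \subseteq V(\psi)$ with the supplementation condition \eqref{cond3}, I conclude $V(\psi) \in \mathcal{N}_{v}$, whence $v \Vdash \mathsf{W}\psi$.

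There is no real obstacle here: the proof is essentially a one-line application of supplementation, once one notices that validity of $\varphi \to \psi$ in an intuitionistic model forces $V(\varphi) \subseteq V(\psi)$. The only small point to keep in mind is that one must use the $\lnot \psi$-conjunct in the antecedent of the conclusion of \wnec to recover the negation clause of $\mathsf{W}\psi$; this is exactly why Fan's formulation of the rule (and of the corresponding axiom mentioned in the footnote) carries the $\lnot \psi$ side condition, and why we cannot in general derive $\mathsf{W}\psi$ from $\mathsf{W}\varphi$ alone, even when $\varphi \to \psi$ is a theorem.
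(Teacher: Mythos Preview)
Your proposal is correct and follows essentially the same approach as the paper: both reduce to checking \wnec, extract $V(\varphi)\subseteq V(\psi)$ from validity of $\varphi\to\psi$, apply supplementation to obtain $V(\psi)\in\mathcal{N}_{v}$, and use the $\lnot\psi$ conjunct for the negation clause of $\mathsf{W}\psi$. The only cosmetic difference is that the paper argues by contradiction (assuming $v\nVdash\mathsf{W}\psi$ and deriving a clash) whereas you give the direct verification.
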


\begin{proof}
We shall check only \wnec. Assume that $\varphi \rightarrow \psi$ is globally true. Suppose that there are $M = \langle W, \leq, \mathcal{N}, V\rangle$ and $w \in W$ such that $w \nVdash (\mathsf{W}\varphi \land \lnot \psi) \to \mathsf{W}\psi)$. Hence, there is $v \geq w$ such that $v \Vdash (\mathsf{W}\varphi \land \lnot \psi)$ but $v \nVdash \mathsf{W}\psi$. This means that: \textbf{i)} $v \Vdash \lnot \varphi$, $V(\varphi) \in \mathcal{N}_{v}$, $v \Vdash \lnot \psi$; and \textbf{ii)} $v \nVdash \lnot \psi$ or $V(\psi) \notin \mathcal{N}_{v}$. It is not possible that $v \nVdash \lnot \psi$. On the other hand, if $\varphi \rightarrow \psi$ is globally true, then $V(\varphi) \subseteq V(\psi)$. Supplementation allows us to say that $V(\psi) \in \mathcal{N}_{v}$. This is contradiction.
\end{proof}

Let us go to the canonical model.

\begin{definition}
\label{can2}
\wwwcpn-model is an \canpn-model where $\mathbf{L} = \wwwlogic$ and for every $w \in W$ and for each formula $\varphi$:

$\mathcal{N}_{w} = \{X \subseteq W; \text{ there is } Y \in n_{w} \text{ such that } Y \subseteq X\}, \text{ where } n_{w} = \{\widehat{\varphi}; \mathsf{W}\varphi \in w\}$

\end{definition}

We must prove the following lemma:

\begin{lemma}
\label{canmonot3}
\wwwcpn-model is indeed an \wwwpn-model. 
\end{lemma}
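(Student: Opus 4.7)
The plan is to verify the two defining properties of a \wwwpn-model for the structure described in Def. \ref{can2}: the supplementation condition (Cond. \ref{cond3}) and the basic monotonicity condition (Cond. \ref{cond1}) inherited from the \wpn-frame layer. I would address supplementation first, since that fact is then reused in the monotonicity argument.

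For supplementation, the claim is essentially built into the definition of $\mathcal{N}_{w}$. If $X \in \mathcal{N}_{w}$, fix a witness $Y \in n_{w}$ with $Y \subseteq X$; for any $Z \supseteq X$ one has $Y \subseteq Z$, so the same $Y$ witnesses $Z \in \mathcal{N}_{w}$. This step requires no appeal to the axioms of \wwwlogic whatsoever, and is purely set-theoretic.

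For the monotonicity condition, I would mimic the argument used in Lemma \ref{canmonot1}. Assume $w \subseteq v$ and $X \in \mathcal{N}_{w}$. By definition, there is a formula $\varphi$ with $\mathsf{W}\varphi \in w$ and $\widehat{\varphi} \subseteq X$. Since prime-theory inclusion transports membership, $\mathsf{W}\varphi \in v$, so $\widehat{\varphi} \in n_{v}$, and consequently $\widehat{\varphi} \in \mathcal{N}_{v}$ (via the trivial inclusion $\widehat{\varphi} \subseteq \widehat{\varphi}$). Applying the already-verified supplementation at $v$ to the inclusion $\widehat{\varphi} \subseteq X$ yields $X \in \mathcal{N}_{v}$, as required.

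I do not expect a genuine obstacle here; the proof is a direct unpacking of the canonical definition combined with the fact that $w \subseteq v$ transmits each $\mathsf{W}\varphi$. The only point worth flagging in the write-up is that, exactly as noted in Remark \ref{rem1}, the premise $v \notin X$ from Cond. \ref{cond1} is not used, so the canonical model in fact satisfies the stronger Cond. \ref{cond2}. This observation will be relevant for phrasing the subsequent completeness statement for \wwwlogic.
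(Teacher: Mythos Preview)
Your proof is correct and follows essentially the same route as the paper: both verify supplementation directly from the definition of $\mathcal{N}_{w}$, and both reduce the monotonicity condition to the fact that $\mathsf{W}\varphi \in w \subseteq v$ implies $\mathsf{W}\varphi \in v$, hence $\widehat{\varphi} \in n_{v}$, from which $X \in \mathcal{N}_{v}$ follows. The only cosmetic differences are that the paper treats monotonicity first and phrases the last step directly via the definition of $\mathcal{N}_{v}$ (rather than invoking supplementation), and that your observation about the unused hypothesis $v \notin X$ appears in the paper as a separate remark immediately after the lemma.
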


\begin{proof}
Let us think about monotonicity condition, namely Cond. \ref{cond1}. Assume that $w \subseteq v$, $v \notin X$ and $X \in \mathcal{N}_{w}$. Now there is $Y \in n_{w}$ such that $Y \subseteq X$. However, as we already know, function $n$ satisfies Cond. \ref{cond2} which is even stronger than Cond. \ref{cond1}. Hence $Y \in n_{v}$ and thus $X \in \mathcal{N}_{v}$. 

As for the supplementation, it is obvious by the very definition of \wwwcpn-model. Assume that $X \in \mathcal{N}_{w}$ and $X \subseteq Y$. Then there is $S \in n_{w}$ such that $S \subseteq X \subseteq Y$. We are ready.
\end{proof}

\begin{remark}
Note that in fact \wwwcpn-model satisfies stronger monotonicity condition, i.e. Cond. \ref{cond2}. As in the case of \wcpn-model, we did not use the fact that $v \notin X$. 
\end{remark}

The next theorem is crucial for completeness:

\begin{theorem}
Let $M = \langle W, \leq, \mathcal{N}, V \rangle$ be an \wwwcpn-model. Then for each $\gamma$ and for each $w \in W$ the following holds: $w \Vdash \gamma \Leftrightarrow \gamma \in w$.
\end{theorem}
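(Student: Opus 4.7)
The plan is to prove the biconditional $w \Vdash \gamma \Leftrightarrow \gamma \in w$ by induction on the complexity of $\gamma$, with the Boolean cases handled exactly as in the proof of Theorem~\ref{th1} (these depend only on the standard prime-theory machinery together with the intuitionistic clause for implication). The only genuinely new work is the modal case $\gamma = \mathsf{W}\varphi$, which I would split into the two implications.

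For $(\Leftarrow)$, assume $\mathsf{W}\varphi \in w$. From \ntax and \modpon we obtain $\lnot\varphi \in w$, and by the (already settled) Boolean induction step this yields $w \Vdash \lnot\varphi$. On the neighborhood side, $\widehat{\varphi} \in n_{w}$ by definition, so $\widehat{\varphi} \in \mathcal{N}_{w}$ because $\widehat{\varphi} \subseteq \widehat{\varphi}$ and $\mathcal{N}_w$ is the supplementation closure of $n_w$. The induction hypothesis gives $V(\varphi) = \widehat{\varphi}$, hence $V(\varphi) \in \mathcal{N}_{w}$, and therefore $w \Vdash \mathsf{W}\varphi$.

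The delicate direction is $(\Rightarrow)$. Assume $w \Vdash \mathsf{W}\varphi$, so $w \Vdash \lnot\varphi$ and $V(\varphi) \in \mathcal{N}_w$. By induction $\lnot\varphi \in w$ and $\widehat{\varphi} \in \mathcal{N}_w$. Unfolding the canonical neighborhood, there must exist some $\psi$ with $\mathsf{W}\psi \in w$ such that $\widehat{\psi} \subseteq \widehat{\varphi}$. The key bridge lemma — which I would isolate and prove separately if not done earlier — is the standard intuitionistic fact that $\widehat{\psi} \subseteq \widehat{\varphi}$ (over the class of all prime theories of \wwwlogic) forces $\vdash \psi \to \varphi$; contrapositively, if $\psi \nvdash \varphi$, the Lindenbaum-style extension produces a prime theory separating them. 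With $\vdash \psi \to \varphi$ in hand, rule \wnec yields $\vdash (\mathsf{W}\psi \land \lnot\varphi) \to \mathsf{W}\varphi$, and since both $\mathsf{W}\psi \in w$ and $\lnot\varphi \in w$, \modpon delivers $\mathsf{W}\varphi \in w$.

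The main obstacle I expect is precisely this $(\Rightarrow)$ direction: one must notice that the supplementation built into $\mathcal{N}_w$ means the witness in $n_w$ is weaker than $\widehat{\varphi}$ rather than equal to it, and that the only way to transport $\mathsf{W}\psi \in w$ to $\mathsf{W}\varphi \in w$ is through the rule \wnec, which in turn requires both $\vdash \psi \to \varphi$ (obtained from the prime-theory separation property of intuitionistic logic) and $\lnot\varphi \in w$ (which we fortunately already have from $w \Vdash \lnot\varphi$ and the induction hypothesis). Once these three ingredients are lined up, the derivation is a single application of \wnec and \modpon.
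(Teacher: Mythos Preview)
Your proposal is correct and follows essentially the same route as the paper: induction on formula complexity, with the $(\Leftarrow)$ direction handled via \ntax\ plus $\widehat{\varphi}\in n_w\subseteq\mathcal{N}_w$, and the $(\Rightarrow)$ direction by extracting a witness $\widehat{\psi}\subseteq\widehat{\varphi}$ from $n_w$, using the prime-theory separation property to get $\vdash\psi\to\varphi$, and then applying \wnec\ together with $\lnot\varphi\in w$ and $\mathsf{W}\psi\in w$. The only cosmetic difference is that the paper phrases the final step of $(\Rightarrow)$ as a case split by contradiction, whereas you close it directly with \modpon; the content is identical.
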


\begin{proof}
We consider the modal case. Assume that $\gamma = \mathsf{W}\varphi$.

$(\Rightarrow)$

Let $w \Vdash \mathsf{W}\varphi$. Then $w \Vdash \lnot \varphi$ and $V(\varphi) \in \mathcal{N}_{w}$. Hence $\lnot \varphi \in w$ and $\widehat{\varphi} \in \mathcal{N}_{w}$. Also there is $\widehat{\psi} \in n_{w}$ such that $\widehat{\psi} \subseteq \widehat{\varphi}$ and $\mathsf{W}\psi \in w$. Now $\vdash \psi \to \varphi$, i.e. this formula is a theorem. Hence, (by means of \wnec) $\vdash (\mathsf{W}\psi \land \lnot \varphi) \to \mathsf{W}\varphi$. Assume now that $\mathsf{W}\varphi \notin w$. There are two possible reasons. First: $\mathsf{W}\psi \notin w$ (contradiction). Second: $\lnot \varphi \notin w$. But $\lnot \varphi \in w$, as we already know. 

$(\Leftarrow)$

Assume that $\mathsf{W}\varphi \in w$. Now $\lnot \varphi \in w$ and then $w \Vdash \lnot \varphi$. Then $\widehat{\varphi} \in \mathcal{N}_{w}$. By induction hypothesis, $V(\varphi) \in \mathcal{N}_{w}$. Thus, $w \Vdash \mathsf{W} \varphi$. 
\end{proof}

\begin{theorem}
\wwwlogic is strongly complete with respect to the class of all \wwwpn-models (and those \wwwpn-models which satisfy Cond. \ref{cond2}).
\end{theorem}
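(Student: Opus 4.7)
The plan is to follow the standard Henkin-style canonical model argument, which at this point amounts to assembling the pieces already built up in the preceding lemmas. Suppose $\Gamma$ is a set of \wwwlogic-formulas with $\Gamma \nvdash \varphi$. First I would invoke the intuitionistic version of Lindenbaum's lemma (to which the paper has already appealed when treating \wlogic) to extend $\Gamma$ to a prime \wwwlogic-theory $w$ with $\varphi \notin w$.

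Next I would form the \wwwcpn-model of Definition \ref{can2} and read off its properties. Lemma \ref{canmonot3} already guarantees that this is a genuine \wwwpn-model, and the remark following it observes that in fact the canonical neighborhood function satisfies the stronger Cond. \ref{cond2}. Therefore a single canonical model simultaneously serves as a countermodel for both classes of frames mentioned in the theorem, and no separate construction is needed for the Cond. \ref{cond2} case.

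Finally I would apply the truth lemma (the theorem immediately preceding this one) to the prime theory $w$: for every $\psi \in \Gamma \subseteq w$ we have $w \Vdash \psi$, while $w \nVdash \varphi$ precisely because $\varphi \notin w$. This is the required countermodel, witnessing failure of $\Gamma \models \varphi$ in both designated classes and thereby establishing strong completeness.

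There is essentially no serious obstacle left at this stage, since all the nontrivial work (soundness of \wnec, well-definedness of $\mathcal{N}$, verification of the monotonicity and supplementation conditions, and the truth lemma) has already been discharged. The one point worth pausing over is the extension step: one needs the intuitionistic Lindenbaum extension to preserve both consistency \emph{and} primeness relative to \wwwlogic, so that $w$ really is a world of the canonical model and $\varphi$ stays outside it. Since the rules \modpon, \rew and \wnec are all compatible with the usual prime-extension construction (they involve only the closure of a theory under derivability), no new complication arises.
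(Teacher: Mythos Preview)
Your proposal is correct and matches the paper's own approach: the paper gives no explicit proof for this theorem, relying instead on the pattern already spelled out after the completeness theorem for \wlogic (extend to a prime theory via the intuitionistic Lindenbaum lemma, then invoke the truth lemma in the canonical model), together with Lemma~\ref{canmonot3} and the subsequent remark about Cond.~\ref{cond2}. There is nothing to add.
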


Let us sum up these results.

\begin{definition}
\label{axio3}
\wwwwlogic is defined as $\wwwlogic \cup \{\wcax\}$.
\end{definition}

\begin{definition}
\label{pndef5}
\wwwwpn-model is an \wpn-model satisfying both supplementation and closure under binary intersections.
\end{definition}

\begin{definition}
\wwwwcpn-model is defined just like \wwwcpn-model (but $W$ consists of \wwwwlogic prime theories).
\end{definition}

\begin{theorem}
\wwwwlogic is strongly complete with respect to the class of all supplemented \wwwwpn-models closed under binary intersections (and those of them which satisfy Cond. \ref{cond2}). 
\end{theorem}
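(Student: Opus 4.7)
The plan is to follow the cumulative pattern established for \wlogic, \wwlogic and \wwwlogic: build a canonical model from prime theories of \wwwwlogic, check it is a legitimate \wwwwpn-model, prove a truth lemma, and then conclude strong completeness via the intuitionistic Lindenbaum lemma. Soundness is straightforward: axiom \wcax validates closure under binary intersections (as in \wwlogic), the rule \wnec validates supplementation (as in \wwwlogic), and everything else is inherited from \wpn, so every \wwwwlogic-theorem is forced at every world of every \wwwwpn-model. The canonical model is defined exactly as \wwwcpn: $\mathcal{N}_{w} = \{X \subseteq W; \text{ there is } Y \in n_{w} \text{ with } Y \subseteq X\}$, where $n_{w} = \{\widehat{\varphi}; \mathsf{W}\varphi \in w\}$, and $W$ is the collection of prime theories of \wwwwlogic.

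The next step is to verify that this \wwwwcpn-model is an \wwwwpn-model. Monotonicity (in fact in the stronger form of Cond. \ref{cond2}) and supplementation carry over verbatim from Lemma \ref{canmonot3}, since those arguments only exploit the facts that $w \subseteq v$ implies $\mathsf{W}\varphi \in w \Rightarrow \mathsf{W}\varphi \in v$, and that $\mathcal{N}_{w}$ is the upward closure of $n_{w}$. The genuinely new point is closure under binary intersections. Given $X, Y \in \mathcal{N}_{w}$, pick $\widehat{\varphi}, \widehat{\psi} \in n_{w}$ with $\widehat{\varphi} \subseteq X$ and $\widehat{\psi} \subseteq Y$; so $\mathsf{W}\varphi, \mathsf{W}\psi \in w$. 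Applying \wcax and \modpon yields $\mathsf{W}(\varphi \land \psi) \in w$, hence $\widehat{\varphi \land \psi} = \widehat{\varphi} \cap \widehat{\psi} \in n_{w}$. Since $\widehat{\varphi} \cap \widehat{\psi} \subseteq X \cap Y$, we conclude $X \cap Y \in \mathcal{N}_{w}$, as required.

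Having established that the canonical model lies in the expected class, I would prove the truth lemma $w \Vdash \gamma \Leftrightarrow \gamma \in w$ by induction on $\gamma$. The Boolean cases are standard and the modal case is identical to the one for \wwwcpn: the $(\Leftarrow)$ direction uses \ntax to produce $w \Vdash \lnot \varphi$ and reads $\widehat{\varphi} \in \mathcal{N}_{w}$ directly from the canonical definition; the $(\Rightarrow)$ direction uses that $V(\varphi) \in \mathcal{N}_{w}$ gives a base element $\widehat{\psi} \subseteq \widehat{\varphi}$ with $\mathsf{W}\psi \in w$, from which $\vdash \psi \to \varphi$ and \wnec yield $\mathsf{W}\varphi \in w$. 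Strong completeness is then standard: if $\Gamma \nvdash \varphi$, extend $\Gamma \cup \{\lnot\varphi\}$-style witness data to a prime theory $w$ (by the intuitionistic Lindenbaum lemma) with $\Gamma \subseteq w$ and $\varphi \notin w$; by the truth lemma $w \Vdash \Gamma$ but $w \nVdash \varphi$. The closing remark that the canonical model already satisfies Cond. \ref{cond2} gives completeness for the narrower class automatically. I do not expect a serious obstacle here: the only subtle point is that $\mathcal{N}_{w}$ is defined via $n_{w}$, so closure under intersections in $\mathcal{N}_{w}$ must be reduced to closure of $n_{w}$ under intersections, which is exactly what \wcax supplies.
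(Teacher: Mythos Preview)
Your proposal is correct and follows essentially the same route as the paper: the paper's proof reduces everything to checking closure under binary intersections in the \wwwwcpn-model, and your argument for that step---pick $\widehat{\varphi},\widehat{\psi}\in n_w$ below $X,Y$, use \wcax to get $\widehat{\varphi\land\psi}\in n_w$, and note $\widehat{\varphi\land\psi}\subseteq X\cap Y$---is exactly what the paper does. The remaining pieces (monotonicity, supplementation, truth lemma, Lindenbaum extension) are, as you say, inherited verbatim from the \wwwcpn analysis.
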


\begin{proof}
It is enough to check closure under intersections in \wwwwcpn-model. Let $X, Y \in \mathcal{N}_{w}$. Hence, there are $\widehat{\varphi}, \widehat{\psi} \in n_{w}$ such that $\widehat{\varphi} \subseteq X$ and $\widehat{\psi} \subseteq Y$. Clearly, $\widehat{\varphi} \cap \widehat{\psi} = \widehat{\varphi \land \psi} \subseteq X \cap Y$. Also, by means of \wcax, $\mathsf{W}(\varphi \land \psi) \in w$ and thus $X \cap Y \in n_{w}$. Finally, $X \cap Y \in \mathcal{N}_{w}$. 
\end{proof}

There are some issues which should be discussed. This will be done in the next subsection.

\subsection{About canonical models and monotonicity}

In general, we borrowed some ideas from \cite{fan} and \cite{gilbert}. However, there are some subtle differences. Let us resume the line of thought presented in \cite{fan} with respect to the classical version of \wwwlogic, that is $\mathbf{M}^{\mathsf{W}}$. 

$\mathbf{i)}$ Fan assumed that the canonical model for $\mathbf{M}^{\mathsf{W}}$ is any model based on maximal theories \footnote{With valuation defined as usual.} in which $\mathsf{W}\varphi \lor \varphi \in w$ $\Leftrightarrow$ $\widehat{\varphi} \in \mathcal{N}_{w}$ [*]. Let us define also another condition for the further needs: $\mathsf{W}\varphi \in w$ $\Leftrightarrow$ $\widehat{\varphi} \in \mathcal{N}_{w}$ [**].

Thus, he has defined the whole family of such models (from the minimal to the maximal one; the former contains precisely proof-sets\footnote{Such proof-sets $\widehat{\varphi}$ that $\mathsf{W}\varphi \lor \varphi \in w$, of course.}, the latter consists of proof-sets \emph{and} all non-proof-sets). 

Whereas we defined our $n_{w}$ (for any $w \in W$) precisely just like in the minimal model. We said that $n_{w}$ contains \emph{only} those $\widehat{\varphi}$ for which $\mathsf{W}\varphi \in w$. Also, our line of reasoning was closer to [**] than to [*] but this is not crucial here.

$\mathbf{ii)}$ Then Fan introduced the notion of supplemented canonical model $M^{+}$ (supplementation of canonical model $M$, in other words) in which $\mathcal{N}_{w}^{+} = \{X \subseteq W$; there is $Y \in \mathcal{N}_{w}$ such that $Y \subseteq X\}$. He showed that $M^{+}$ is indeed canonical: that it satisfies [*]. Due to some reasons, it would be problematic for him to show that $M^{+}$ satisfies [**]. It would require typical monotonicity rule $\varphi \to \psi \vdash \mathsf{W}\varphi \to \mathsf{W}\psi$ which is not sound. 

Our way is different. We do not say that neighborhood function $\mathcal{N}$ in our \wwwcpn-model is "canonical" in the same sense as $n$. It would be irrelevant because the definition of $n_{w}$ leaves no place for any variants: as we said, these are \emph{precisely} proof-sets satisfying certain property. However, maybe it would be sensible to follow Fan directly? Assume that $n_{w}$ is defined by means of, let us say, clause [**]. It can be [*] also, it does not matter: the real problem lies in the persistence of truth. Both [*] and [**] are too vague to force monotonicity (with respect to $n$ and in the sense of Cond. \ref{cond1} or Cond. \ref{cond2}). Assume that $w \subseteq v$ and $X \in n_{w}$. We may also suppose that $v \notin X$. If $X = \widehat{\varphi}$ for certain $\varphi$, then we can repeat our actual reasoning. But if $X \neq \widehat{\varphi}$ for any $\varphi$, then we cannot say anything special about this fact. Of course, if our model was maximal, then by the very definition $X$ would belong to $n_{w}$. But if not, then we would be in a quandary.

It seems that a similar solution to a similar dilemma has been obtained in \cite{dalmonte}. These authors prepared bi-neighborhood semantics for weak intuitionistic modal logics and they also used minimal canonical models. In case of richer logics they used canonical models "equipped with" supplementation (just as our \wwwcpn-model), not the supplementation of previously defined model. 

Gilbert and Venturi found different solution than Fan did. They assumed that neighborhoods in canonical model (for the classical version of \wwwwlogic) are defined by means of [**]. Then they used \emph{negative} supplementation. This is the following condition: 

$Y \in \mathcal{N}_{w}$, $Y \subseteq X$, $w \notin X$ $\Rightarrow$ $X \in \mathcal{N}_{w}$.

In the negative supplementation of canonical model, for any $w \in W$ and for each $\varphi$ we have:

$\mathcal{N}_{w}^{+} = \{X \subseteq W;$ there is $Y \in \mathcal{N}_{w}$ such that $Y \subseteq X$ and $w \notin X\}$.

Again, this "feature of negativity" (that is, the assumption that $w \notin X$) is helpful in proving that negative supplementation is indeed canonical. From our point of view, one thing is interesting. Let us reproduce the definition of \wwwwcpn-model but with the following definition of neighborhoods:

$\mathcal{N}_{w} = \{X \subseteq W;$ there is $Y \in n_{w}$ such that $Y \subseteq X$ and $w \notin X$\}, where $n_{w} = \{ \widehat{\varphi}; \mathsf{W}\varphi \in w\}$.

This is in accordance with our previous considerations. The whole proof of completeness is almost identical. However, there is one noteworthy moment. Let us prove that Cond. \ref{cond1} of monotonicity is satisfied. Let $w \subseteq v$, $X \in \mathcal{N}_{w}$ and $v \notin X$. There is $Y \in n_{w}$ such that $Y \subseteq X$. However, $n$ satisfies Cond. \ref{cond2} (Lem. \ref{canmonot1} and Rem. \ref{rem1}), so $Y \in n_{v}$. Thus $X \in \mathcal{N}_{v}$. Note that in this case we prove only that $\mathcal{N}$ satisfies Cond. \ref{cond1} and not necessarily Cond. \ref{cond2}. Clearly, we used the assumption that $v \notin X$. 

\subsection{Confirmation and discouragement}

\subsection{Language, frames and models}
In this section we shall work with all languages (resp. propositional systems and classes of frames or models) simultaneously. Some definitions and proofs will be shortened to avoid repetition of obvious things. 

\begin{definition}
\alog (resp. \blog, \clog, \dlog)-alphabet in its non-modal part is identical with \wlogic-alphabet. Instead of $\mathsf{W}$, it contains the following modal operator:

\begin{multicols}{2}
\begin{itemize}
\item[$\mathbf{i)}$] $\Box$ for \alog.
\item[$\mathbf{ii)}$] $\Diamond$ for \blog.
\item[$\mathbf{iii)}$] $\blacksquare$ for \clog.
\item[$\mathbf{iv)}$] $\bullet$ for \dlog.
\end{itemize}
\end{multicols}
\end{definition}

\begin{definition}
\label{pndefi}
\apn (resp. \bpn, \cpn, \dpn) frame is a \pn-frame with the following additional restriction:

(for \apn-frames)
\begin{equation}
\label{condi1}
[w \leq v, v \in X \subseteq W, X \in \mathcal{N}_{w}] \Rightarrow X \in \mathcal{N}_{v}. 
\end{equation}

(for \bpn-frames)
\begin{equation}
\label{condi2}
[w \leq v, v \in X \subseteq W, -X \notin \mathcal{N}_{w}] \Rightarrow -X \notin \mathcal{N}_{v}. 
\end{equation}

(for \cpn-frames)
\begin{equation}
\label{condi3}
[w \leq v, v \in X \subseteq W, -X \in \mathcal{N}_{w}] \Rightarrow -X \in \mathcal{N}_{v}.
\end{equation}

(for \dpn-frames)
\begin{equation}
\label{condi4}
[w \leq v, v \in X \subseteq W, X \notin \mathcal{N}_{w}] \Rightarrow X \notin \mathcal{N}_{v}.
\end{equation}
\end{definition}

\begin{definition}
\apn (resp. \bpn, \cpn, \dpn)-model is a \pn-model with valuation and forcing of non-modal formulas defined just like in Def. \ref{forcedef1} but with an additional clause:

\begin{itemize}
\item[$\mathbf{i)}$] $w \Vdash \Box \varphi$ $\Leftrightarrow$ $w \Vdash \varphi$ and $V(\varphi) \in \mathcal{N}_{v}$ (in \apn-model).

\item[$\mathbf{ii)}$] $w \Vdash \Diamond \varphi$ $\Leftrightarrow$ $w \Vdash \varphi$ and $-V(\varphi) \notin \mathcal{N}_{w}$ (in \bpn-model).

\item[$\mathbf{iii)}$] $w \Vdash \blacksquare \varphi$ $\Leftrightarrow$ $w \Vdash \varphi$ and $-V(\varphi) \in \mathcal{N}_{w}$ (in \cpn-model).

\item[$\mathbf{iv)}$] $w \Vdash \bullet \varphi$ $\Leftrightarrow$ $w \Vdash \varphi$ and $V(\varphi) \notin \mathcal{N}_{w}$ (in \dpn-model).
\end{itemize}
\end{definition}

Here we may discuss our interpretation of those operators. \emph{First}, we have $\Box$ which is often used to model \emph{necessity}. Indeed, what we are doing here, is just an application of the general idea of necessity in the intuitionistic framework. However, our understanding of $\Box$ can be expressed in this way: our agent accepts $\varphi$ (in a certain set of circumstances, namely in a world $w$) and, additionally, his decision is supported or confirmed by the fact that $V(\varphi) \in \mathcal{N}_{w}$. 

\emph{Second}, we have $\Diamond$. This symbol is often used to speak about \emph{possibility}. However, here we assume that forcing of $\Diamond \varphi$ implies (in particular) forcing of $\varphi$. In the light of our interpretation, we have the following situation: our agent accepts $\varphi$ and we \emph{cannot} say that he is dissuaded from this decision: the set $-V(\varphi)$ is not among $w$-neighborhoods. 

Let us assume for a moment that our language contains both $\Box$ and $\Diamond$ and our model satisfies both needful conditions of monotonicity (i.e. it is $\Box \Diamond$\pn-model). Now we may easily show that $\Box \varphi \rightarrow \lnot \Diamond \lnot \varphi$ is a tautology. Assume that $w$ denies this formula. Then there is $v \geq w$ such that $v \Vdash \Box \varphi$ and $v \nVdash \lnot \Diamond \lnot \varphi$. Hence, $v \Vdash \varphi$ and $V(\varphi) \in \mathcal{N}_{v}$. However, there is $s \geq v$ such that $s \Vdash \Diamond \lnot \varphi$. In particular, it means that $s \Vdash \lnot \varphi$. But this is impossible because $s \geq v$ and we have persistence of truth.

On the other hand, let us consider $W = \{w, v\}$, where $\mathcal{N}_{w} = \mathcal{N}_{v} = W$, $w \leq v$ and $V(\varphi) = \{v\}$. This model satisfies both required monotonicity conditions (namely, for $\Box$ and $\Diamond$): \textbf{i)} if $x \leq y$ and $y \in X \in \mathcal{N}_{x}$, then $X = W$, hence $X \in \mathcal{N}_{y}$; \textbf{ii)} if $x \leq y$, $y \in X$ and $-X \notin \mathcal{N}_{x}$, then $-X \notin \mathcal{N}_{y}$, because if not, then $-X = W$, hence $X = \emptyset$ (contradiction, due to the fact that $x \in X$). Anyway, $w \Vdash \lnot \Diamond \lnot \varphi$, because for any $x \geq w$ we have $x \nVdash \lnot \varphi$. At the same time, $w \nVdash \Box \varphi$ because $w \nVdash \varphi$. Hence, $\lnot \Diamond \lnot \varphi \rightarrow \Box \varphi$ is not a tautology in the class of $\Box \Diamond$\pn-models. 

Note that $\Box \varphi \rightarrow \Diamond \varphi$ is not a tautology in this class, even if this implication is very natural. Clearly, it will become true if we assume that $X \in \mathcal{N}_{w} \Rightarrow -X \notin \mathcal{N}_{w}$ (for any $w \in W$). 

\emph{Third}, we have $\blacksquare$. This operator says: $\varphi$ is accepted by the subject but he is advised against this formula (by his advisors). In the \emph{fourth} case we have $\bullet$: now $\varphi$ is still accepted by our agent but it is not suggested by his "advisory board". In fact, there is also another interpretation of $\bullet$: that $\varphi$ is true but unknown. This leads us to the \emph{logic of unknown truths}. We shall deal with this topic later. 

The question is: how to combine all four operators in one propositional logic based on intuitionism? Of course, we speak about logic complete with respect to a certain class of frames: relational-neighborhood or different. In fact, it seems that such system would split into two parts: $\Box \Diamond$-one (with axiom $\Box \varphi \to \Diamond \varphi$) and $\blacksquare \bullet$-one (with axiom $\blacksquare \varphi \to \bullet \varphi$). From the purely philosophical point of view it is not clear if there should be any close connection between these two parts. 

In general, it seems that relationships between our operators should be (or, at least, can be) rather weak. This is because we should allow our "advisory board" to be neutral in some sense. For example, if we are advised against $\varphi$ (as in  the case of $\blacksquare \varphi$), then it does not mean that we are encouraged to accept $\lnot \varphi$. Not on the ground of intuitionistic logic or some other logics with non-classical negation. Moreover, if we are \emph{not} discouraged ($\Diamond \varphi$ applies to us), then it still does not mean that we are encouraged (by means of $\Box \varphi$). If $\varphi$ is not suggested by our board ($\bullet \varphi$ case), then it does not mean that this board is against $\varphi$ (as in the case of $\blacksquare \varphi$). 

Note that we can speak here about three different things: \textbf{i)} lack of recommendation of some formula; \textbf{ii)} advising against this formula; \textbf{iii)} supporting the opposite formula, namely $\lnot \varphi$. This last case was not analysed above. It would require a new operator, say $\mathbf{N}$, defined in the following manner: $w \Vdash \mathbf{N}$ $\Leftrightarrow$ $w \Vdash \varphi$ and $V(\lnot \varphi) \in \mathcal{N}_{w}$. These considerations are interesting on the intuitionistic ground: surely, $V(\lnot \varphi)$ may be different than $-V(\varphi)$. However, it is always true that $V(\lnot \varphi) \subseteq -V(\varphi)$. As for the monotonicity condition $\mathbf{N}$, it can be: 

[$w \leq v$, $v \in X$ and there is $Y \in \mathcal{N}_{w}$ such that $Y \subseteq -X]$ $\Rightarrow$ $Y \in \mathcal{N}_{v}$.

Suppose now that $w \in W$ and $w \Vdash \mathbf{N}\varphi$. Thus $w \Vdash \varphi$ and $V(\lnot \varphi) \in \mathcal{N}_{w}$. Of course $v \Vdash \varphi$ and $V(\lnot \varphi) \subseteq -V(\varphi)$.  Hence, $V(\lnot \varphi) \in \mathcal{N}_{v}$. Now $v \Vdash \mathbf{N}\varphi$.

Clearly, it is possible to find three new versions of $\mathbf{N}$, defined analogously to $\Diamond$, $\blacksquare$ and $\bullet$. 

\subsection{Monotonicity and completeness}

One can easily prove the following fact:

\begin{theorem}
In every \apn (resp. \bpn, \cpn, \dpn)-model $M = \langle W, \mathcal{N}, \leq, V \rangle$ the following holds: if $w \Vdash \gamma$ and $w \leq v$, then $v \Vdash \gamma$.
\end{theorem}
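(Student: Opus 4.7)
The proof is a routine induction on the complexity of $\gamma$, simultaneously treating all four modal cases. The base case (propositional variables) uses persistence of $V$ built into Def. \ref{moddef1}, and the propositional cases ($\land$, $\lor$, $\rightarrow$, $\bot$) are identical to those for \wpn-models and do not interact with the new modal clauses, so I would skip them just as the proof of Theorem \ref{theo1} does.

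For the modal step, I would fix $w \leq v$ and argue separately in each of the four systems, in each case invoking exactly the corresponding monotonicity restriction from Def. \ref{pndefi}. For the \apn-case, assume $w \Vdash \Box\varphi$, i.e.\ $w \Vdash \varphi$ and $V(\varphi) \in \mathcal{N}_{w}$. By the inductive hypothesis $v \Vdash \varphi$, so $v \in V(\varphi)$; then Cond.~\ref{condi1} applied to $X := V(\varphi)$ gives $V(\varphi) \in \mathcal{N}_{v}$, hence $v \Vdash \Box\varphi$. For the \bpn-case, from $w \Vdash \Diamond\varphi$ we get $w \Vdash \varphi$ and $-V(\varphi) \notin \mathcal{N}_{w}$; the inductive hypothesis yields $v \in V(\varphi)$, and Cond.~\ref{condi2} (with the same $X := V(\varphi)$) forces $-V(\varphi) \notin \mathcal{N}_{v}$, giving $v \Vdash \Diamond\varphi$. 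The \cpn- and \dpn-cases are completely parallel, using Cond.~\ref{condi3} and Cond.~\ref{condi4} respectively, with the common feature that the hypothesis ``$v \in X$'' in each monotonicity condition is discharged precisely by the persistence of $\varphi$ under $\leq$ that the inductive hypothesis provides.

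I do not expect any genuine obstacle here: the frame conditions of Def.~\ref{pndefi} were tailor-made so that the truth condition for each modal operator transfers upward along $\leq$, and the role of the clause $v \in X$ (absent in Cond.~\ref{cond1} for \wpn but present in all four new conditions) is exactly to be supplied by the inductive hypothesis on the non-modal conjunct $w \Vdash \varphi$. The only point worth flagging in the write-up is that this is why the truth definitions for $\Box, \Diamond, \blacksquare, \bullet$ all include the conjunct $w \Vdash \varphi$ (rather than its negation, as for $\mathsf{W}$): without it, the premise $v \in X$ of the monotonicity conditions would not be available, and persistence would fail.
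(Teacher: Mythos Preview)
Your proposal is correct and follows exactly the same approach as the paper: induction on $\gamma$, skipping the propositional cases, and for the modal step applying the tailored frame condition from Def.~\ref{pndefi} with $X := V(\varphi)$, using the inductive hypothesis on the conjunct $w \Vdash \varphi$ to secure the premise $v \in X$. The paper's proof is in fact terser than yours---it writes out only the $\bullet$ case as a representative example---but the argument is identical.
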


\begin{proof}
It is easy to see that the conditions imposed on models are coherent with monotonicity. We shall not discuss all cases; let us show the idea on the example of $\bullet$. Assume that we have \dpn-model, $w \in W$, $w \leq v$. Let $\gamma = \Box \varphi$ and $w \Vdash \gamma$. Then $w \Vdash \varphi$ and $V(\varphi) \notin \mathcal{N}_{w}$. Then $v \Vdash \varphi$, i.e. $v \in V(\varphi) \notin \mathcal{N}_{w}$. From the monotonicity condition, $V(\varphi) \notin \mathcal{N}_{v}$. Thus $v \Vdash \Box \varphi$. 
\end{proof}

Now we present sound and complete axiomatization of our logics. In fact, they can all be considered as one system: slightly stronger than the weakest intuitionistic mono-modal logic. Hence, one can say that what we are really doing (at least on this stage of our investigations) is to present four different, sound and complete, types of semantics for one propositional system. However, due to the fact that we want to distinguish among these four interpretations which were studied before, we use four symbols. 

\begin{definition}
\label{axiom1}
The \alog (resp. \blog, \clog, \dlog)-logic is defined as the smallest set of formulas containing $\ipc$ and closed under the following set of rules: $\{\tax x, \modpon, \ext x\}$ where:

\begin{enumerate}

\item \ipc is the set of all intuitionistic axiom schemes and their modal instances.

\item $\tax_x$ is the axiom scheme $x \varphi \rightarrow \varphi$, where:

\begin{multicols}{2}
\begin{itemize}
\item [$\mathbf{i)}$] $x = \Box$, if logic is \alog.

\item [$\mathbf{ii)}$] $x = \Diamond$, if logic is \blog.

\item [$\mathbf{iii)}$] $x = \blacksquare$, if logic is \clog.

\item [$\mathbf{iv)}$] $x = \bullet$, if logic is \dlog.

\end{itemize}
\end{multicols}

\item \modpon is \textit{modus ponens}.

\item $\ext_x$ is \textit{rule of extensionality}: $\varphi \leftrightarrow \psi \vdash x \varphi \leftrightarrow x \psi$, where $x$ defined just as in the case of $\tax_x$.
\end{enumerate}
\end{definition}

The following theorem holds (and is obvious):

\begin{theorem}
\label{sound}
\alog (resp. \blog, \clog, \dlog) is sound with respect to the class of all \apn (resp. \bpn, \cpn, \dpn)-frames.
\end{theorem}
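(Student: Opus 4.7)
The plan is to run a standard soundness proof by induction on the length of derivations, treating the four systems in parallel since their structure is identical: each has the intuitionistic base, a single modal operator $x \in \{\Box, \Diamond, \blacksquare, \bullet\}$, an axiom $\tax_x$ of the form $x\varphi \to \varphi$, the rule $\modpon$, and the extensionality rule $\ext_x$. Throughout, persistence of forcing (which has just been established for all four kinds of models) is available, so the intuitionistic Kripke clauses behave correctly and implication is well-defined.

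First I would dispatch the \textbf{axiom schemes}. The intuitionistic schemes in $\ipc$ are validated as usual in any pre-ordered model with persistence; their $x$-modal instances are identical in form, so nothing new is needed. For $\tax_x$, I want to show $w \Vdash x\varphi \to \varphi$ for every $w$. Fix $v \geq w$ with $v \Vdash x\varphi$; in each of the four semantics, the very first conjunct of the forcing clause for $x\varphi$ is $v \Vdash \varphi$, so the conclusion is immediate. This step uses nothing beyond the defining clauses, which is precisely why the same T-style axiom appears in all four logics.

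Next I would verify the \textbf{rules}. For $\modpon$, if $w \Vdash \varphi$ and $w \Vdash \varphi \to \psi$, then, taking $v = w$ in the clause for implication, $w \Vdash \psi$; this preserves validity. For $\ext_x$, suppose $\varphi \leftrightarrow \psi$ is valid in every frame of the class in question. Then in every model $M$ on such a frame one has $V(\varphi) = V(\psi)$, and therefore the side-condition $V(\varphi) \in \mathcal{N}_w$ (or $-V(\varphi) \in \mathcal{N}_w$, or the corresponding negated membership, depending on the case) coincides with its $\psi$-analogue at every world $w$. Combined with the fact that $w \Vdash \varphi \Leftrightarrow w \Vdash \psi$, this yields $w \Vdash x\varphi \Leftrightarrow w \Vdash x\psi$ at every $w$, hence validity of $x\varphi \leftrightarrow x\psi$.

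I do not expect a genuine obstacle here: the neighborhood clauses were tailored so that the $\tax_x$ axioms hold trivially, and the extensionality rule is forced by the fact that the modal operators see $\varphi$ only through its truth-set $V(\varphi)$. The only point that merits a remark is that the implication in $\tax_x$ is intuitionistic, so one must check the condition at every $v \geq w$; but persistence (Thm.\ applied to the new operators) ensures $v \Vdash x\varphi$ still has $v \Vdash \varphi$ as a direct conjunct, so even this is automatic. Hence one can omit routine verifications and simply indicate the modal cases as above.
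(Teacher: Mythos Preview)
Your argument is correct and is exactly the standard verification one would expect: check that $\tax_x$ holds because each forcing clause for $x\varphi$ has $w \Vdash \varphi$ as its first conjunct, and that $\ext_x$ preserves validity because validity of $\varphi \leftrightarrow \psi$ forces $V(\varphi) = V(\psi)$ in every model, so the neighborhood side-condition (whether of the form $V(\varphi) \in \mathcal{N}_w$, $-V(\varphi) \in \mathcal{N}_w$, or their negations) transfers verbatim from $\varphi$ to $\psi$. The paper itself gives no proof at all---it simply declares the theorem ``obvious''---so your write-up is not a different route but rather the unpacking of what the paper leaves implicit; in particular there is nothing to compare beyond noting that the paper omits exactly the routine checks you carry out.
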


As for the canonical model, we define it simultaneously for each system.

\begin{definition}
\label{canmod1}
\acanpn (resp. \bcanpn, \ccanpn, \dcanpn)-model is a triple $\langle W, \leq, \mathcal{N}, \leq, V \rangle$ where:

\begin{enumerate}
\item $W$ is the set of all \alog (resp. \blog, \clog, \dlog) prime theories.

\item For every $w, v \in W$ we say that $w \leq v$ \textit{iff} $w \subseteq v$.

\item $\mathcal{N}$ is a function from $W$ into $P(P(W))$ such that for every $w \in W$ and for each formula $\varphi$:

\begin{itemize}
\item[$\mathbf{i)}$] $\mathcal{N}_{w} = \{ \widehat{\varphi}; \Box \varphi \in w \}$ (in \acanpn-model).

\item[$\mathbf{ii)}$] $\mathcal{N}_{w} = \{X \subseteq W; X = W \setminus \widehat{\varphi}; \Diamond \varphi \notin w \}$ (in \bcanpn-model).

\item[$\mathbf{iii)}$] $\mathcal{N}_{w} = \{ X \subseteq W; X = W \setminus \widehat{\varphi}; \blacksquare \varphi \in w \}$ (in \ccanpn-model).

\item[$\mathbf{iv)}$] $\mathcal{N}_{w} = \{\widehat{\varphi}; \bullet \varphi \notin w \}$ (in \dcanpn-model).
\end{itemize}

\item $V: PV \rightarrow P(W)$ is a function defined as it follows: $w \in V(q) \Leftrightarrow q \in w$.

\end{enumerate}
\end{definition}

For convenience, let us agree on the following denotations: 

$\mathbf{CAN} = \{\acanpn, \bcanpn, \ccanpn, \dcanpn\}$,

$\mathbf{LOG} = \{\alog, \blog, \clog, \dlog\}$.

We must check if our (canonical) neighborhood functions are \textit{well-defined}.

\begin{lemma}
\label{lemcan}
For each $M \in \mathbf{CAN}$ and for each prime theory $w$ based on the appropriate logic from $\mathbf{LOG}$ we have the following:

\begin{itemize}

\item In $\acanpn$-model:  if $\widehat{\varphi} \in \mathcal{N}_{w}$ and $\widehat{\varphi} = \widehat{\psi}$, then $\Box \psi \in w$.

\item In $\bcanpn$-model:  if $W \setminus \widehat{\varphi} \in \mathcal{N}_{w}$ and $\widehat{\varphi} = \widehat{\psi}$, then $\Diamond \psi \notin w$.

\item In $\ccanpn$-model:  if $\widehat{\varphi} \in \mathcal{N}_{w}$ and $\widehat{\varphi} = \widehat{\psi}$, then $\blacksquare \psi \notin w$.

\item In $\dcanpn$-model:  if $W \setminus \widehat{\varphi} \in \mathcal{N}_{w}$ and $\widehat{\varphi} =\widehat{\psi}$, then $\bullet \psi \notin w$.

\end{itemize}
\end{lemma}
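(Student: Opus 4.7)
The plan is to reduce all four cases to a single schema based on the rule of extensionality $\ext_x$ together with a non-modal fact relating proof-set equality to provable equivalence.

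First I would record the following auxiliary fact (already alluded to in the paper just after Remark \ref{rem1}): if $W$ is the collection of all prime theories of the ambient logic from $\mathbf{LOG}$ and $\widehat{\varphi}=\widehat{\psi}$, then $\vdash\varphi\leftrightarrow\psi$. The argument is the standard one using the intuitionistic Lindenbaum lemma: if $\nvdash\varphi\to\psi$, then the theory generated by $\varphi$ does not prove $\psi$, and one extends it to a prime theory $u$ with $\varphi\in u$ and $\psi\notin u$, contradicting $\widehat{\varphi}=\widehat{\psi}$; the reverse direction is symmetric. Crucially, this step does not involve the modal operator, so it is uniform across the four logics.

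Now I would handle the four cases by the same pattern. Take the $\acanpn$ case first. From $\widehat{\varphi}\in\mathcal{N}_w$ and the defining clause we have $\Box\varphi\in w$. The auxiliary fact gives $\vdash\varphi\leftrightarrow\psi$, whence $\ext_\Box$ yields $\vdash\Box\varphi\leftrightarrow\Box\psi$. Since theorems belong to every prime theory, $\Box\varphi\leftrightarrow\Box\psi\in w$, and two applications of \modpon produce $\Box\psi\in w$. For $\ccanpn$ the same argument using $\ext_\blacksquare$ gives $\blacksquare\psi\in w$ (the statement as printed appears to have a typo; the intended conclusion is well-definedness of the clause $\blacksquare\varphi\in w$). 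For $\bcanpn$ and $\dcanpn$ the clauses are of the form ``$\Diamond\varphi\notin w$'' and ``$\bullet\varphi\notin w$''; here I would argue by contrapositive: from $\vdash\Diamond\varphi\leftrightarrow\Diamond\psi$ (obtained via $\ext_\Diamond$ from $\vdash\varphi\leftrightarrow\psi$), if $\Diamond\psi\in w$ were the case, then by \modpon we would get $\Diamond\varphi\in w$, contradicting the hypothesis; the case of $\bullet$ is identical with $\ext_\bullet$ in place of $\ext_\Diamond$.

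The only genuinely non-trivial ingredient is the auxiliary fact, and that is really a result about the behaviour of the non-modal fragment under the Lindenbaum construction; everything else is a mechanical appeal to the extensionality rule, the fact that theorems are contained in all prime theories, closure under \modpon, and (for two of the cases) contraposition. Because the proofs in the four cases differ only in which extensionality rule is invoked and whether the membership condition is positive or negative, I would write the $\acanpn$ case out in full and then state that the remaining three are analogous, pointing out the single line where contraposition replaces direct deduction.
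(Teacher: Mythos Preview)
Your proposal is correct and follows essentially the same approach as the paper: both reduce everything to the auxiliary fact that $\widehat{\varphi}=\widehat{\psi}$ implies $\vdash\varphi\leftrightarrow\psi$, then apply the extensionality rule $\ext_x$ and close under \modpon, using contraposition for the clauses with a negative membership condition. The only cosmetic difference is that the paper writes out the $\ccanpn$ case (a contrapositive one) in full rather than the $\acanpn$ case, and you are right that the printed $\ccanpn$ item contains a typo relative to the defining clause in Def.~\ref{canmod1}.
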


\begin{proof}
We shall not deal with all cases (they are similar). Take, for example, \ccanpn-model. If $\widehat{\varphi} \in \mathcal{N}_{w}$, then $\blacksquare \varphi \notin w$. As we already know, $\varphi \leftrightarrow \psi \in \clog$. Hence, $\blacksquare \varphi \leftrightarrow \blacksquare \psi \in \clog \subseteq w$. Suppose that $\blacksquare \psi \in w$. Then, by means of \modpon and the fact that $\blacksquare \psi \rightarrow \blacksquare \varphi$ is a theorem, we have $\blacksquare \varphi \in w$. Contradiction. Hence, $\blacksquare \psi \notin w$. 
\end{proof}

 The next lemma deals with monotonicity.

\begin{lemma}
\label{monotone}
Canonical models from $\mathbf{CAN}$ satisfy conditions (resp.) $\mathbf{i)}$ - $\mathbf{iv)}$ from Def. \ref{pndefi}. 
\end{lemma}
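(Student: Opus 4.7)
The plan is to verify each of the four monotonicity conditions of Def. \ref{pndefi} independently, using in each case the single key fact that $w \leq v$ in the canonical model unpacks to $w \subseteq v$ as sets of formulas, so that any formula belonging to $w$ lifts to $v$ (and, contrapositively, any formula missing from $v$ is missing from $w$).

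For \acanpn the argument is direct: if $X \in \mathcal{N}_{w}$, then $X = \widehat{\varphi}$ for some $\varphi$ with $\Box \varphi \in w$; since $w \subseteq v$ we get $\Box \varphi \in v$, and hence $X = \widehat{\varphi} \in \mathcal{N}_{v}$. The \ccanpn case proceeds in the same way, but through the complement: from $-X = W \setminus \widehat{\varphi}$ with $\blacksquare \varphi \in w$ we infer $\blacksquare \varphi \in v$ and therefore $-X \in \mathcal{N}_{v}$.

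For \bcanpn and \dcanpn it is convenient to reason by contrapositive, because the canonical neighborhoods there are defined by a \emph{negative} clause. For \dcanpn, suppose $X \in \mathcal{N}_{v}$; then $X = \widehat{\varphi}$ with $\bullet \varphi \notin v$. If one had $\bullet \varphi \in w$, then $w \subseteq v$ would force $\bullet \varphi \in v$, a contradiction; so $\bullet \varphi \notin w$ and $X \in \mathcal{N}_{w}$. The \bcanpn argument is identical with $\bullet$ replaced by $\Diamond$ and $X = \widehat{\varphi}$ replaced by $-X = W \setminus \widehat{\varphi}$.

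I anticipate no serious obstacle. Observe that none of the four verifications actually uses the assumption $v \in X$; in analogy with Rem. \ref{rem1}, the canonical models thus satisfy conditions strictly stronger than those stated in Def. \ref{pndefi}. The only subtlety worth flagging is that each canonical neighborhood must be well-defined independently of the syntactic representative of $\widehat{\varphi}$, which is precisely the content of Lem. \ref{lemcan}.
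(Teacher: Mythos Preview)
Your proof is correct and follows essentially the same approach as the paper: in each case one simply uses $w \subseteq v$ to transfer the relevant modal formula (or its absence) from $w$ to $v$, and indeed the hypothesis $v \in X$ is never used, as you and the paper both observe. The only cosmetic difference is that the paper argues the \dcanpn case directly (showing $X \notin \mathcal{N}_{w} \Rightarrow X \notin \mathcal{N}_{v}$) whereas you argue by contrapositive, but this is the same argument in different dress.
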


\begin{proof}
Again, we shall expose only one case here. Let $M$ be a \dcanpn-model. Assume that $w \subseteq v \in X$ and $X \notin \mathcal{N}_{w}$. Hence, for any $\varphi$ we have: $X \neq \widehat{\varphi}$ or $\bullet \varphi \in w$. In the first case, we are done, so we focus on the second case. As we know, $w \subseteq v$, hence $\bullet \varphi \in v$. Thus, $X$ cannot be in $\mathcal{N}_{v}$, even if it has the form of $\widehat{\varphi}$. 
\end{proof}

Finally, we obtain the crucial lemma:

\begin{lemma}(\textit{truth lemma})
\label{truth}
In any model from $\mathbf{CAN}$ we have (for each $\gamma$ and for each $w \in W$): $w \Vdash \gamma \Leftrightarrow \gamma \in w$.
\end{lemma}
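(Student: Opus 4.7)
The plan is a standard simultaneous induction on the complexity of $\gamma$, run in parallel for all four models in $\mathbf{CAN}$. The base case $\gamma = q \in PV$ is immediate from $w \in V(q) \Leftrightarrow q \in w$, and the Boolean cases are the familiar ones for intuitionistic canonical constructions: conjunction and disjunction reduce to primeness of $w$, and implication (together with the derived $\lnot$) is handled by extending a witnessing theory via the intuitionistic Lindenbaum lemma whenever $\varphi \to \psi \notin w$. All of this is uniform across the four modalities.

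The modal step splits into four structurally parallel subcases. I would first write out $\gamma = \Box\varphi$ in an \acanpn-model as the prototype. For $(\Leftarrow)$: if $\Box\varphi \in w$, then the axiom $\Box\varphi \to \varphi$ together with \modpon gives $\varphi \in w$, so $w \Vdash \varphi$ by IH; also $\widehat{\varphi} \in \mathcal{N}_w$ directly by the definition of the canonical neighborhood, and IH converts $\widehat{\varphi}$ into $V(\varphi)$, so $w \Vdash \Box\varphi$. For $(\Rightarrow)$: unpack $w \Vdash \Box\varphi$ into $w \Vdash \varphi$ and $V(\varphi) \in \mathcal{N}_w$, apply IH to obtain $\varphi \in w$ and $\widehat{\varphi} \in \mathcal{N}_w$, and then invoke Lemma \ref{lemcan} to absorb the possible reparametrisation $\widehat{\varphi} = \widehat{\psi}$, concluding $\Box\varphi \in w$.

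The $\blacksquare$ case is the same pattern with $\widehat{\varphi}$ replaced by its complement throughout; the $\Diamond$ and $\bullet$ cases are dual, with set-theoretic membership becoming non-membership. In the dual cases one extra step is needed: a clause such as ``$W \setminus \widehat{\varphi} \notin \mathcal{N}_w$'' has to be verified by contradiction — if it were in $\mathcal{N}_w$, then $W \setminus \widehat{\varphi} = W \setminus \widehat{\psi}$ for some $\psi$ with $\Diamond\psi \notin w$, so $\widehat{\varphi} = \widehat{\psi}$ forces $\vdash \varphi \leftrightarrow \psi$, then the rule of extensionality yields $\vdash \Diamond\varphi \leftrightarrow \Diamond\psi$, and $\Diamond\varphi \in w$ forces $\Diamond\psi \in w$, contradicting $\Diamond\psi \notin w$. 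The $\bullet$ case is analogous without the complement.

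The main obstacle is nothing more than bookkeeping: four operators times two directions is eight short verifications, all hinging on the same three tools — the axiom $x\varphi \to \varphi$ to retrieve $\varphi \in w$, the induction hypothesis to translate between proof-sets $\widehat{\varphi}$ and truth-sets $V(\varphi)$, and the rule of extensionality (via Lemma \ref{lemcan}) to make the canonical neighborhoods well-defined up to provable equivalence. The only portion demanding a moment of thought is the negative polarity ($\Diamond$, $\bullet$), where one has to argue that a given set is \emph{not} a neighborhood by a short contradiction of the kind sketched above rather than by direct inspection of the definition.
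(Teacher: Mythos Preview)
Your proposal is correct and follows essentially the same approach as the paper: induction on formula complexity, the $\tax_x$ axiom to extract $\varphi \in w$ from $x\varphi \in w$, the induction hypothesis to identify $\widehat{\varphi}$ with $V(\varphi)$, and well-definedness (Lemma~\ref{lemcan} / extensionality) for the neighborhood bookkeeping. The paper illustrates only the $\bullet$ case and leaves the short contradiction step for the dual polarities implicit under ``by the very definition of canonical neighborhoods,'' whereas you choose $\Box$ as your prototype and spell out that contradiction explicitly; this is a cosmetic difference, not a methodological one.
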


\begin{proof}
Let us think about \dcanpn-model, for example. Assume that $\gamma = \bullet \varphi$ and $w \in W$. Let $w \Vdash \gamma$, i.e. $w \Vdash \varphi$ and $V(\varphi) \notin \mathcal{N}_{w}$. By induction hypothesis, $\varphi \in w$ and $\widehat{\varphi} \notin \mathcal{N}_{w}$. By the very definition of canonical neighborhoods, $\bullet \varphi \in w$. 

Now suppose that $\bullet \varphi \in w$. Then $\varphi \in w$ (hence, by induction, $w \Vdash \varphi$) and $\widehat{\varphi} \notin \mathcal{N}_{w}$. By induction, $V(\varphi) \notin \mathcal{N}_{w}$. We are ready.
\end{proof}

Now we obtain our expected conclusion: 

\begin{theorem}
Each system from $\mathbf{LOG}$ is strongly complete with respect to the appropriate class of models from $\mathbf{CAN}$. 
\end{theorem}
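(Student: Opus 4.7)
The plan is to use the standard strong completeness strategy for intuitionistic modal logics via the canonical model. Fix one system $\mathbf{L} \in \mathbf{LOG}$ and suppose $\Gamma \nvdash_{\mathbf{L}} \varphi$. I would first invoke the intuitionistic Lindenbaum lemma (mentioned in the paper before Definition \ref{can0}) applied to the pair $(\Gamma, \{\varphi\})$ to produce a prime $\mathbf{L}$-theory $w^{*}$ with $\Gamma \subseteq w^{*}$ and $\varphi \notin w^{*}$. By Definition \ref{canmod1}, $w^{*}$ is a world of the corresponding canonical model $M^{*} \in \mathbf{CAN}$.

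At this point almost all the heavy lifting has been done in the preceding lemmas, and the argument is really a book-keeping step. Lemma \ref{lemcan} guarantees that the canonical neighborhood function is well-defined (independent of the choice of representative formula for a proof-set) in each of the four cases. Lemma \ref{monotone} shows that $M^{*}$ satisfies the appropriate monotonicity condition from Definition \ref{pndefi}, so $M^{*}$ lies in the declared class of frames. One also needs persistence of the canonical valuation $V(q) = \{z : q \in z\}$, which is immediate from $w \subseteq v$ and the definition of $\leq$ on prime theories. Applying the Truth Lemma (Lemma \ref{truth}) at $w^{*}$ gives $w^{*} \Vdash \gamma$ for every $\gamma \in \Gamma$ and $w^{*} \nVdash \varphi$, which is exactly the semantic counter-model required for strong completeness.

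The main obstacle: essentially none remains at this stage, because Lemmas \ref{lemcan}, \ref{monotone}, and \ref{truth} absorb all the delicate work. The only subtlety is that the intuitionistic Lindenbaum construction must be carried out on the pair $(\Gamma, \{\varphi\})$ rather than on $\Gamma \cup \{\lnot\varphi\}$: as emphasised earlier in the paper, intuitionistic lack of acceptance is strictly weaker than acceptance of negation, so one must extend $\Gamma$ to a prime theory which merely omits $\varphi$, not one which forces $\lnot\varphi$. With this caveat the proof reduces to stitching together the three lemmas and writing down the resulting counter-model.
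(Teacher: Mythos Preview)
Your proposal is correct and follows exactly the standard route the paper has in mind: the paper gives no explicit proof here, treating the theorem as an immediate consequence of Lemmas \ref{lemcan}, \ref{monotone} and \ref{truth} together with the Lindenbaum extension described before Definition \ref{can0} (and sketched after Theorem \ref{th1} for \wlogic). Your write-up merely spells out the bookkeeping the paper leaves implicit, including the point about extending the pair $(\Gamma,\{\varphi\})$ rather than $\Gamma\cup\{\lnot\varphi\}$.
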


\begin{remark}
Note that in Lemma \ref{monotone} we did not use the fact that $v \in X$. Hence, in each case canonical model satisfies more restricted condition of monotonicity. For example, in case of \acanpn-model it would be: $[w \leq v$, $X \in \mathcal{N}_{w}]$ $\Rightarrow$ $X \in \mathcal{N}_{v}$. Analogous clauses can be easily derived for other models. For this reasons, our systems (which are, as we said, one system from the purely syntactical point of view) are complete with respect to the narrower classes.
\end{remark}

As we have already said, $\bullet$ has certain connection with so called logics of unknown truths, investigated in \cite{gilbert} and \cite{fan}. In fact, this operator is taken from these systems. Of course, they are basically classical. It is possible to use another operator, namely $\circ \varphi$, defined as $\lnot \bullet \varphi$, i.e. $w \Vdash \circ \varphi$ $\Leftrightarrow$ $w \nVdash \varphi$ or $V(\varphi) \in \mathcal{N}_{w}$. This duality between $\circ$ and $\bullet$ is true also in the intuitionistic setting, as one can easily check using \dpn-models. 

Fan studied some properties of languages containing both $\bullet$ and $\varphi$. Hence, we think that these issues should be studied also in the context of intuitionism. Our initial research suggests that it may be interesting. For example, consider system $\mathbf{B}_{k}$, investigated in \cite{gilbert}. It contains axioms $\circ \top$, $\bullet \varphi \to \varphi$ and $(\circ \varphi \land \circ \psi) \to \circ(\varphi \land \psi)$. It is closed under \modpon and the rule $\varphi \to \psi \vdash (\circ \varphi \land \varphi) \to (\circ \psi \land \psi)$. It seems that in the proof of completeness of this system (with respect to the class of filters) Gilbert and Venturi used the fact that if $\varphi \notin w$ (where $w$ is a maximal theory), then $\lnot \varphi \in w$ and hence $\circ \varphi \in w$ (because from the axiom $\bullet \varphi \to \varphi$ we may infer that $\lnot \varphi \to \lnot \bullet \varphi \in w$, i.e. $\lnot \varphi \to \circ \varphi \in w$). However, in intuitionistic prime theories we cannot say that if $\varphi \notin w$, then $\lnot \varphi \in w$.

\end{document}